\newtheorem{pro}{Proposition}
\newtheorem{lem}{Lemma}
\newtheorem{cor}{Corollary}
\newtheorem*{rem}{Remark}
\title{Inverse of the flow and moments of the free Jacobi process associated with a single projection}
\keywords{Free Jacobi process; free unitary Brownian motion; Herglotz transform; univalent map, radial L\"owner equation, Aleksandrov-Clark measures.} 
\author[N. Demni]{Nizar Demni}
\address{IRMAR, Universit\'e de Rennes 1\\ Campus de
Beaulieu\\ 35042 Rennes cedex\\ France}
\email{nizar.demni@univ-rennes1.fr}
\author[T. Hamdi]{Tarek Hamdi}
\address{Department of Management Information Systems, College of Business Administration \\ Al-Rass, Al-Qassim University \\ Kingdom Of Saudi Arabia 
and Laboratoire d'Analyse Mathématiques et applications, \\ LR 11ES11 \\ Universit\'e de Tunis El-Manar, Tunisie
}
\email{hamditarek@hotmail.fr}
\begin{document}
\maketitle
\begin{abstract}
This paper is a companion to a series of papers devoted to the study of the spectral distribution of the free Jacobi process associated with a single projection. Actually, we notice that the flow derived in \cite{Dem-Hmi} solves a radial L\"owner equation and as such, the general theory of L\"owner equations implies that it is univalent in some connected region in the open unit disc. We also prove that its inverse defines the Aleksandrov-Clark measure at $z=1$ of some Herglotz function which is absolutely-continuous with an essentially bounded density. As a by-product, we deduce that $z=1$ belongs only to the discrete spectrum of the unitary operator whose spectral dynamics are governed by the flow. Moreover, we use a previous result due to the first author in order to derive an explicit, yet complicated, expression of the moments of both the unitary and the free Jacobi processes. The paper is closed with some remarks on the boundary behavior of the flow's inverse.  

\end{abstract}

\section{Introduction}
Motivated by random matrix theory and especially by the large-size limit of the matrix-valued Jacobi process (\cite{Dou}), the first author introduced the free Jacobi process $(J_t)_{t \geq 0}$ and performed its free stochastic analysis (\cite{Dem}). This is a family of non negative and bounded operators defined as the radial part of the compression of the free unitary Brownian motion by two free (in Voiculescu's sense) orthogonal projections in a finite von Neumann algebra. Up to a normalization, the free Jacobi process can be viewed as Voiculescu's liberation process associated with two projections since the free unitary Brownian motion $(Y_t)_{t \geq 0}$ converges weakly as $t \rightarrow \infty$ to a Haar-distributed unitary operator (\cite{Voi}). When both projections coincide, the series of papers \cite{DHH}, \cite{Dem-Hmi} and \cite{Demni} aim to determine the Lebesgue decomposition of the spectral distribution of $J_t$ at any fixed time $t > 0$. In particular, if the continuous rank of the underlying projection is $1/2$, then it is proved in \cite{DHH} that this probability measure is (up to an affine transformation) nothing else but the spectral distribution of the real part of $Y_{2t}$. 

For arbitrary continuous ranks, the problem of describing the spectral distribution of $J_t$ becomes considerably more difficult. In order to tackle it, the moments of $J_t$ were related to those of a unitary operator $U_t$ built out of $Y_t$ and of the symmetry associated with the orthogonal projection (\cite{Dem-Hmi}). Consequently, the rank of this symmetry vanishes when the rank of the projection equals $1/2$ and in this case, $U_t$ and $Y_{2t}$ shares the same spectrum (\cite{DHH}). Note that this last result was subsequently generalized in \cite{Izu-Ued} to the case of two projections with equal rank $1/2$ and to arbitrary distributions of $U_0$. Note also that at the analytic level, the moment generating functions of $U_t$ and of $J_t$ (the latter is viewed as an element of the compressed algebra) are related to each other by the Szeg\"o map. Coming back to the single projection case with arbitrary rank, it is proved in \cite{Dem-Hmi} that the discrete spectrum of $U_t, t > 0,$ contains $z=1$ with a weight given by the absolute value of the rank of the symmetry. It is also proved in the same paper that $z=-1$ does not belong to the discrete spectrum of $U_t$ and both results entirely determine the discrete spectrum of $J_t$. 

The approach undertaken in \cite{Dem-Hmi} and leading to these spectral data is mostly analytic. Actually, apart from the free stochastic calculus machinery developed in \cite{BenL} and used to derive a partial differential equation (hereafter pde) for the spectral dynamics of $(U_t)_{t \geq 0}$, the method of characteristics allowed to determine a flow so far defined, at any fixed time $t > 0$, in a sub-interval of $(-1,1)$ depending on $t$. However, the description of the continuous spectrum of $J_t$ necessitates the investigation of the analytic extension of the flow in the open unit disc. Indeed, it was shown in \cite{Dem-Hmi} using the Stieltjes inversion formula that the density of the spectral distribution of $J_t$ is entirely determined by the boundary behavior of the Herglotz transform of the spectral distribution of $U_t$ (in the unit disc). Tempting to prove the analytic extension of the inverse of the flow, the first author computed its Taylor coefficients around the origin (up to elementary conformal mappings) using Lagrange inversion formula (\cite{Demni}). Unfortunately, the obtained expression is quite involved and contains nested alternating sums, which prevents from obtaining the precise value of the convergence radius of the Taylor series. 

In this paper, we realize that the sought properties of the flow (analytic extension, univalence, range) may be derived from general facts on L\"owner equations. Indeed, we notice that one of the two coupled ordinary differential equations solved by the flow in \cite{Dem-Hmi} is the radial L\"owner equation driven by the Herglotz transform of the spectral distribution of $U_t$. Hence, for any $t > 0$, the flow is the Riemann map of the connected component of its analyticity region containing the origin (\cite{Law}). Using the  characteristics equation obtained in \cite{Dem-Hmi}, we also relate the inverse of flow to the Aleksandrov-Clark measure at $z=1$ of some Herglotz function which is absolutely-continuous (with respect to the Lebesgue measure on the unit circle) with an essentially bounded density. Actually, the existence of such a density is ensured by the fact we prove below that the domain where the flow is univalent is at a positive distance from $z=1$. On the other hand, we can view this relation as giving an analytic extension of the inverse of the flow in the open unit disc, then use it to prove directly (without appealing to L\"owner equations) that this extension is univalent there. Afterwards, we prove that $z=1$ belongs only to the discrete spectrum of $U_t, t > 0$, which was already noticed in \cite{Dem-Hmi} in the large-time limit $t = \infty$ using the explicit Lebesgue decomposition of the spectral distribution of $U_{\infty}$. Finally, we derive an explicit, yet complicated, expression of the moments of $U_t$ and of $J_t$. 

The paper is organized as follows. For sake of self-containedness, we settle in the next section some notations needed in the remainder of the paper and recall the main results proved in \cite{DHH}, \cite{Dem-Hmi} and \cite{Demni}. In the third section, we prove that the flow is univalent in some connected region of the open unit disc containing $z=0$ and show that its inverse defines an Aleksandrov-Clark measure at $z=1$. We also prove in the same section that the Herglotz transform associated with this measure is absolutely-continuous with respect to Lebesgue measure on the unit circle and that its density is essentially bounded. The fourth section is devoted to the proof of the fact that that $z=1$ belongs only to the discrete spectrum of $U_t, t > 0,$ and to the derivation of the moments of both $U_t$ and $J_t$. Finally, we close the paper with some remarks on the boundary behavior of the inverse of the flow. More precisely, we determine the intersection of the boundary of its range with the real axis and discuss the boundary equation.

\section{Reminder and notations}
Let $\mathscr{A}$ be a unital von Neumann algebra endowed with a finite trace $\tau$ and an involution $\star$. Let $P \in \mathscr{A}$ be an orthogonal projection with continuous rank $\tau(P)$ and assume without loss of generality that there exists a free unitary Brownian motion $Y = (Y_t)_{t \geq 0} \in \mathscr{A}$ which is free (in Voiculescu's sense) with $P$ (\cite{Biane}). Then, the free Jacobi process associated with the projection $P$ is defined by (\cite{Dem}):
\begin{equation*}
J_t := PY_tPY_t^{\star}P, \quad t \geq 0, 
\end{equation*}
with values in the compressed von Neumann algebra: 
\begin{equation*}
\left(P\mathscr{A}P, \frac{1}{\tau(P)} \tau\right). 
\end{equation*}
When $\tau(P) = 1/2$, the spectral distribution of $J_t$ at any fixed time $t > 0$ coincides with that of 
\begin{equation*}
\frac{Y_{2t} + Y_{2t}^{\star} + 2{\bf 1}}{4}
\end{equation*}
in the algebra $(\mathscr{A}, \tau)$, where ${\bf 1}$ is the unit of $\mathscr{A}$. In \cite{DHH}, two proofs of this description were given, one of them relies on the following expansion: let $S = 2P-{\bf 1}$ be the orthogonal symmetry associated with $P$ and set $\kappa := \tau(S) = 2\tau(P) - {\bf 1}$. Then, one has for any $n \geq 1$:
\begin{align}\label{Binom}
\tau[(PY_tPY_t^{\star})^n] = \frac{1}{2^{2n}}\tau[(({\bf 1} + S)Y_t({\bf 1} + S)Y_t^{\star})^n]  = \frac{1}{2^{2n+1}}\binom{2n}{n}  + \frac{\kappa}{2} + \frac{1}{2^{2n}}\sum_{k=1}^n \binom{2n}{n-k}\tau((U_t)^k),
\end{align}
where $U_t := SY_tSY_t^{\star}$. Since $S^2 = {\bf 1}$ and $Y$ is unitary, then $U$ is unitary as well and if further $\tau(S) = 0$, then $U_t$ and $Y_{2t}$ shares the same spectral distribution (\cite{DHH}). More generally, let
\begin{equation}\label{Herg}
H_{\kappa, t}(z) := \int_{\mathbb{T}} \frac{w+z}{w-z}\nu_{\kappa, t}(dw) = 1+2\sum_{n \geq 1}\tau(U_t^n)z^n
\end{equation}
be the Herglotz transform of the spectral distribution $\nu_{\kappa,t}$ of $U_t$, where $\mathbb{T}$ is the unit circle. Then, $(t,z) \mapsto H_{\kappa,t}(z)$ is the unique analytic solution in $\mathbb{D}$ of the pde (\cite{Dem-Hmi}, Proposition 3.3):
\begin{equation}\label{Pde}
\partial_t H_{\kappa, t} = -\frac{z}{2}\partial_z \left[H_{\kappa,t}^2 - \kappa^2 H_0^2\right], 
\end{equation}
where the initial value
\begin{equation*}
H_{\kappa,0}(z) := H_0(z) := \frac{1+z}{1-z}
\end{equation*}
is the Herglotz transform of $\mu_{\kappa,0} = \delta_1$. Moreover, the key result proved in \cite{Dem-Hmi} is the following characteristics equation: for any $t > 0$ and any $\kappa \in (-1,1)$, there exists a real $z_{\kappa,t} \in (0,1)$ and a flow $z \mapsto \psi_{\kappa,t}(z)$ defined in $(-1,z_{\kappa,t})$ such that
\begin{align}\label{Dyn}
[H_{\kappa,0}(z)]^2  - [H_{\kappa,\infty}(z)]^2 = [H_{\kappa, t}(\psi_{\kappa, t}(z))]^2 - [H_{\kappa,\infty}(\psi_{\kappa,t}(z))]^2.
\end{align}   
Here,
\begin{equation*}
H_{\kappa,\infty}(z) = \sqrt{1+ \frac{4\kappa^2z}{(1-z)^2}}
\end{equation*}
is the Herglotz transform  of the weak limit $\nu_{\kappa,\infty}$ of $\nu_{\kappa, t}$\footnote{There is a misprint in the statement of Corollary 2.2. in \cite{Dem-Hmi}.}:
\begin{equation*}
\nu_{\kappa,\infty} = |\kappa|\delta_1 + \sqrt{1 - \frac{\kappa^2}{\sin^2(\theta/2)}} {\bf I}_{\{|\sin(\theta/2)| \geq |\kappa|\}} d\theta
\end{equation*}
In particular, the support of the density of $\nu_{\kappa,\infty}$ is disconnected at $z=1$ as soon as $\kappa \neq 0$. Note in passing that this density shows up (up to a constant) in the large-size limit of the spectral distribution of the Hua-Pickrell model with a real parameter (see \cite{BNR}, p.4383, see also \cite{Sim}, p.87). In order to recall the expression of the flow $\psi_{\kappa,t}$, we introduce the following maps: let
\begin{equation*}
\alpha: z \mapsto \frac{1-\sqrt{1-z}}{1+\sqrt{1-z}} =  \frac{z}{(1+\sqrt{1-z})^2},
\end{equation*}
where the principal branch of the square root is taken. This is an analytic one-to-one map from the cut plane $z \in \mathbb{C} \setminus [1,\infty[$ onto the open unit disc $\mathbb{D}$ and its compositional inverse is given by: 
\begin{equation*}
\alpha^{-1}(z) = \frac{4z}{(1+z)^2}.
\end{equation*}
Set
\begin{equation*}
y := H_0(z), \, z \in \mathbb{D}, \quad a(y) := \sqrt{\kappa^2 + (1-\kappa^2)y^2},
\end{equation*}
and recall from \cite{Biane} that 
\begin{equation*}
\xi_{2t}(u) := \frac{u-1}{u+1}e^{tu}
\end{equation*} 
is a analytic one-to-one map from a Jordan domain in the open right half-plane onto $\mathbb{D}$. Then, the flow $\psi_{\kappa,t}$ is given by
\begin{equation*}
\psi_{\kappa,t}(z) = \alpha\left(\frac{a^2(y)}{a^2(y)-\kappa^2}\alpha^{-1}[\xi_{2t}(a(y))]\right), \quad z \in (-1,z_{\kappa,t}),
\end{equation*}
and $z_{\kappa,t} \in (0,1)$ is the unique real solution of the equation: 
\begin{equation*}
\xi_{2t}[a(H_0(z))] = \frac{a(H_0(z)) - |\kappa|}{a(H_0(z)) + |\kappa|}.
\end{equation*}
Finally, $\psi_{\kappa,t}$ is locally invertible around $z=0$ with values in $\mathbb{D}$ and we denote below $\psi_{\kappa,t}^{-1}$ its local inverse. Using the variable change $z \mapsto a(H_0(z))$, the flow leads to the following map 
\begin{equation}\label{Func1}
\phi_{\kappa,t}: a \mapsto \alpha\left(\frac{a^2}{a^2-\kappa^2}\alpha^{-1}[\xi_{2t}(a)]\right)
\end{equation}
which is invertible near $a=1$. The Taylor coefficients of its local inverse were determined in \cite{Demni} and are given by: 
\begin{equation}\label{MomFlo}
c_n(\kappa,t) := \frac{2}{n}\sum_{k=1}^n(-1)^{n+k} \frac{2n}{n+k}\binom{n+k}{n-k} \sum_{j=1}^{k} \binom{2k}{k-j}e^{-jt} \sum_{m= 0}^{j-1}L_{j-m-1}^{(m+1)}(2jt)P_k^{(m)}(\kappa^2), \quad n \geq 1.
\end{equation}
Here, $L_k^{(\gamma)}$ is the $k$-th Laguerre polynomial of parameter $\gamma > -1$ (\cite{AAR}) and 
\begin{equation*}
P_k^{(m)}(\kappa^2) := \frac{(-2)^m}{m!}\sum_{j=0}^k\binom{k}{j}(-\epsilon)^j(2j)_m, \, \, \, (0)_m := \delta_{m0}, \,\,\, (2j)_m := \frac{\Gamma(2j+m)}{\Gamma(2j)}, j \geq 1.
\end{equation*}
In the sequel, we shall use the notations `$a$' as a variable and $a(\cdot)$ as the above function, and hope there is no confusion. 
 
\section{Inverse of the flow: Analytic extension, univalence and Aleksandrov-Clark measures} 
In this section, we prove that the flow $\psi_{\kappa,t}$ extends to a univalent function in the connected component of its analyticity region containing the origin onto $\mathbb{D}$. 
As mentioned in the introductory part, this result is a consequence of the general theory of L\"owner equations. 
\begin{pro}
Let $t > 0$ and $\kappa \in (-1,1)$. Let
\begin{align*}
\Sigma_{\kappa, t} &:= \{\psi_{\kappa,t}(z) \in \mathbb{D}\}
\\& = \left\{z \in \mathbb{D}, \, \frac{a^2(y)}{a^2(y)-\kappa^2}\alpha^{-1}[\xi_{2t}(a(y))] \in \mathbb{C} \setminus [1,\infty[, \,\, y = H_0(z)\right\} 
\end{align*}
be the analyticity region of $\psi_{\kappa, t}$ and denote $\Lambda_{\kappa, t}$ its connected component containing the origin $z=0$. Then $\psi_{\kappa,t}$ is a one-to-one map from $\Lambda_{\kappa, t}$ onto $\mathbb{D}$. 
\end{pro}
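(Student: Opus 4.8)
The plan is to identify the second of the two coupled ODEs solved by $\psi_{\kappa,t}$ in \cite{Dem-Hmi} as a \emph{radial L\"owner equation} and then invoke the standard theory (as in \cite{Law}). Concretely, differentiating the characteristics equation \eqref{Dyn} with respect to $t$ and using the pde \eqref{Pde} for $H_{\kappa,t}$, one should obtain an equation of the form
\begin{equation*}
\partial_t \psi_{\kappa,t}(z) = \psi_{\kappa,t}(z)\, G_{\kappa,t}(\psi_{\kappa,t}(z)), \qquad \psi_{\kappa,0}(z) = z,
\end{equation*}
where $G_{\kappa,t}$ is (a multiple of) the Herglotz transform $H_{\kappa,t}$, hence has nonnegative real part on $\mathbb{D}$ and value $1$ at the origin after suitable normalization. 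This is precisely a radial L\"owner chain run backwards in time; the driving function $t \mapsto H_{\kappa,t}$ is continuous (indeed analytic) in $t$ by \eqref{Herg}. The first step is therefore to carry out this differentiation carefully, checking that the factor $a^2(y)/(a^2(y)-\kappa^2)$ and the maps $\alpha, \xi_{2t}$ conspire to produce exactly the radial L\"owner form, and to pin down the normalization $\psi_{\kappa,t}'(0)$ (which should be $e^{-t}$ or a similar decaying exponential coming from $\xi_{2t}$).

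Once the radial L\"owner equation is in hand, the general theory gives for each fixed $t>0$ a univalent map of $\mathbb{D}$ into $\mathbb{D}$ with $\psi_{\kappa,t}(0)=0$ and $\psi_{\kappa,t}'(0)\neq 0$, defined a priori as a solution of the initial value problem. The second step is to match this abstract L\"owner solution with the explicit flow $\psi_{\kappa,t}$ from \cite{Dem-Hmi}: both solve the same ODE with the same initial condition on the sub-interval $(-1,z_{\kappa,t})$ where the explicit formula is known to be valid, so by uniqueness of solutions of ODEs they agree there, and the L\"owner solution provides the analytic continuation. The third step is to identify the domain of this continuation with $\Lambda_{\kappa,t}$: the L\"owner solution is defined and univalent on all of $\mathbb{D}$, but its values need not stay in $\mathbb{D}$; the set where the explicit formula \eqref{Func1} lands in $\mathbb{C}\setminus[1,\infty[$ (equivalently, where $\psi_{\kappa,t}(z)\in\mathbb{D}$) is by definition $\Sigma_{\kappa,t}$, and restricting to the connected component $\Lambda_{\kappa,t}$ of the origin gives a univalent map onto $\mathbb{D}$. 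Surjectivity onto $\mathbb{D}$ should follow because $\alpha$ maps $\mathbb{C}\setminus[1,\infty[$ \emph{onto} $\mathbb{D}$ and the argument of $\alpha$ in \eqref{Func1} ranges over all of $\mathbb{C}\setminus[1,\infty[$ as $z$ ranges over $\Lambda_{\kappa,t}$; alternatively, one can argue that a univalent self-map of $\mathbb{D}$ arising from a radial L\"owner chain with the given growth of $\psi_{\kappa,t}'(0)$ is automatically onto the component in question.

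The main obstacle I expect is not the invocation of L\"owner theory per se but the bookkeeping that links the \emph{two} equations of \cite{Dem-Hmi}: one must be careful that the equation playing the role of the radial L\"owner equation is the one for $\psi_{\kappa,t}$ itself (in the time variable), and that the auxiliary characteristics equation \eqref{Dyn} is only used to produce the closed-form solution, not to derive univalence. A secondary subtlety is the direction of time: radial L\"owner chains are usually normalized so that $|\psi_{\kappa,t}'(0)|$ \emph{decreases}, and one must check the sign of $\partial_t \psi_{\kappa,t}$ in \eqref{Pde} to confirm we are in the decreasing regime (consistent with $H_0$ being the Herglotz transform of $\delta_1$ at $t=0$ and the process spreading out as $t$ grows). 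Finally, one should make sure that $\Lambda_{\kappa,t}$ is genuinely the maximal domain of univalence containing $0$ and that the map extends continuously/analytically up to $\partial\Lambda_{\kappa,t}$ only where needed; but since the statement only claims univalence from $\Lambda_{\kappa,t}$ onto $\mathbb{D}$, the connectedness argument together with the open mapping theorem and the explicit surjectivity of $\alpha$ should suffice without any delicate boundary analysis.
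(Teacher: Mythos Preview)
Your overall strategy---recognize the flow ODE as a radial L\"owner equation and invoke the standard theory from \cite{Law}---is exactly what the paper does. However, your proposal contains a genuine confusion about the \emph{direction} of the flow, and as a result several of your intermediate steps are either wrong or unnecessary.

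The equation in question is $\partial_t\psi_{\kappa,t} = \psi_{\kappa,t}\, H_{\kappa,t}(\psi_{\kappa,t})$ with $\psi_{\kappa,0}(z)=z$, and it is already recorded in \cite{Dem-Hmi} (proof of Theorem~4.1); there is no need to rederive it by differentiating \eqref{Dyn}. Since $\Re H_{\kappa,t}>0$ on $\mathbb{D}$, we have $\partial_t\log|\psi_{\kappa,t}(z)|>0$: trajectories move \emph{outward} and eventually leave $\mathbb{D}$. This is the ``reverse'' radial L\"owner flow in the terminology of \cite{Law} (Theorem~4.14). The correct statement is therefore \emph{not} that $\psi_{\kappa,t}$ is a univalent self-map of $\mathbb{D}$ with shrinking image, as you write in your second step; rather, for each $t>0$ the map is defined only on the survival set $\{z\in\mathbb{D}: T_z>t\}$, where $T_z$ is the exit time of the trajectory started at $z$, and Theorem~4.14 asserts directly that $\psi_{\kappa,t}$ is a conformal bijection from this set \emph{onto} $\mathbb{D}$. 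Your sentence ``the L\"owner solution is defined and univalent on all of $\mathbb{D}$, but its values need not stay in $\mathbb{D}$'' has it backwards.

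Once the direction is sorted out, the rest collapses: there is no separate ``abstract'' solution to match (the flow $\psi_{\kappa,t}$ \emph{is} the L\"owner solution, already given in closed form), surjectivity onto $\mathbb{D}$ is part of the conclusion of Theorem~4.14 rather than something to argue via~$\alpha$, and the identification $\{z:T_z>t\}=\Lambda_{\kappa,t}$ is a one-line observation: the survival set is open and connected (as a decreasing intersection of simply connected domains containing $0$, with $T_0=\infty$), and it is by definition the component of $\{z:\psi_{\kappa,t}(z)\in\mathbb{D}\}=\Sigma_{\kappa,t}$ containing the origin. The paper's proof is literally these three sentences.
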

\begin{proof} 
The flow $\psi_{\kappa,t}$ solves the coupled ordinary differential equations (see the proof of Theorem 4.1 in \cite{Dem-Hmi}):
\begin{equation}\label{Low}
\partial_t\psi_{\kappa,t} = \psi_{\kappa,t} H_t(\psi_{\kappa,t}), \quad \psi_{\kappa,0}(z) = z,
\end{equation}
\begin{equation*}
\partial_t \left[H_{\kappa,t}(\psi_{\kappa,t})\right] = 2\kappa^2\frac{\psi_{\kappa,t}(1+\psi_{\kappa,t})}{(1-\psi_{\kappa,t})^3}. 
\end{equation*}
The differential equation \eqref{Low} is the radial L\"owner equation driven by the Herglotz function $H_{\kappa,t}$. As a matter of fact, if $T_{z}$ is the supremum of all $t$ such that $\psi_{\kappa,t}(z) \in \mathbb{D}$ for fixed $z \in \mathbb{D}$ (the lifespan of $t \mapsto \psi_{\kappa,t}(z)$), then $\psi_{\kappa,t}$ is a analytic one-to-one map from $\{z, T_{z} > t\}$ onto $\mathbb{D}$  (see Theorem 4.14 in \cite{Law} and its proof). Since $T_0 = \infty$ then $\{z, T_z > t\} = \Lambda_{\kappa, t}$ and the proposition is proved. 
\end{proof}

Now, we shall use the characteristics equation \eqref{Dyn} to relate the flow's inverse to $H_{\kappa,t}$ and $H_0$ in such a way that it defines the Aleksandrov-Clark measure at $z=1$ of a probability measure on the unit circle.  
\begin{pro}
The inverse $\psi_{\kappa,t}^{-1}$ of $\psi_{\kappa,t}$ satisfies 
\begin{equation}\label{Ext}
H_{0}(\psi_{\kappa,t}^{-1}(z)) = \frac{\sqrt{[H_{\kappa,t}(z)]^2 - \kappa^2[H_0(z)]^2}}{\sqrt{1-\kappa^2}}
\end{equation}
in the open unit disc. Moreover, there exists a unique probability measure $N_{\kappa,t}$ in $\mathbb{T}$ such that 
\begin{equation*}
H_{0}(\psi_{\kappa,t}^{-1})(z) = \int_{\mathbb{T}}\frac{w+z}{w-z}N_{\kappa,t}(dw).
\end{equation*}
\end{pro}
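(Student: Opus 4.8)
The plan is to derive \eqref{Ext} directly from the characteristics equation \eqref{Dyn} by composing it with the inverse flow, and then to recognize $H_0\circ\psi_{\kappa,t}^{-1}$ as a Herglotz function from the positivity of its real part. First I would record the elementary identity
\begin{equation*}
[H_{\kappa,\infty}(z)]^2 = \kappa^2[H_0(z)]^2 + 1 - \kappa^2, \qquad z \in \mathbb{D},
\end{equation*}
which is immediate from the explicit formulas for $H_{\kappa,\infty}$ and $H_0 = H_{\kappa,0}$ together with $[H_0(z)]^2 - 1 = 4z/(1-z)^2$. Next I would upgrade \eqref{Dyn} from the real interval $(-1,z_{\kappa,t})$ to the whole connected region $\Lambda_{\kappa,t}$: since $[H_{\kappa,t}]^2$ and $[H_0]^2$ are analytic on $\mathbb{D}$ and the branch points of $H_{\kappa,\infty}$ lie on $\mathbb{T}$, both sides of \eqref{Dyn} are analytic on $\Lambda_{\kappa,t}$ ($\psi_{\kappa,t}$ being analytic there with values in $\mathbb{D}$), so the identity theorem propagates the equality. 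Then, using that Proposition 1 makes $\psi_{\kappa,t}\colon\Lambda_{\kappa,t}\to\mathbb{D}$ a biholomorphism, I would substitute $z\mapsto\psi_{\kappa,t}^{-1}(z)$ for an arbitrary $z\in\mathbb{D}$ and eliminate $H_{\kappa,\infty}$ via the displayed identity; the $(1-\kappa^2)$-factors and additive constants cancel, leaving the squared version of \eqref{Ext} valid throughout $\mathbb{D}$.

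To pass from the squared identity to \eqref{Ext} itself I would fix the branch of the square root at the origin. The relevant point is that $\psi_{\kappa,t}^{-1}(\mathbb{D}) = \Lambda_{\kappa,t}\subseteq\mathbb{D}$, so $H_0\circ\psi_{\kappa,t}^{-1}$ is analytic on $\mathbb{D}$ (it never meets the pole of $H_0$ at $z=1$), takes values in the right half-plane $H_0(\mathbb{D})$ hence is zero-free, and satisfies $H_0(\psi_{\kappa,t}^{-1}(0)) = H_0(0) = 1$ because $\psi_{\kappa,t}^{-1}(0)=0$. Therefore it is the unique analytic square root on the simply connected domain $\mathbb{D}$ of the right-hand side of \eqref{Ext} that equals $1$ at the origin, and since $H_{\kappa,t}(0) = H_0(0) = 1$ this is precisely the branch written in \eqref{Ext}.

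For the existence and uniqueness of $N_{\kappa,t}$ I would apply the Herglotz representation theorem to $F := H_0\circ\psi_{\kappa,t}^{-1}$: by the above, $F$ is analytic on $\mathbb{D}$, has strictly positive real part there (because $\Lambda_{\kappa,t}\subseteq\mathbb{D}$ and $H_0$ maps $\mathbb{D}$ onto the open right half-plane), and $F(0)=1$ is real. Hence there is a unique positive finite Borel measure $N_{\kappa,t}$ on $\mathbb{T}$ with $F(z)=\int_{\mathbb{T}}\frac{w+z}{w-z}N_{\kappa,t}(dw)$, the additive imaginary constant being forced to vanish by $F(0)\in\mathbb{R}$; evaluating at $z=0$ gives $N_{\kappa,t}(\mathbb{T})=F(0)=1$, so $N_{\kappa,t}$ is a probability measure.

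I do not expect a genuine obstacle: the computation eliminating $H_{\kappa,\infty}$ is routine and the final step is standard complex analysis. The points needing care are the domain bookkeeping — Proposition 1 is exactly what licenses composing \eqref{Dyn} with $\psi_{\kappa,t}^{-1}$ on all of $\mathbb{D}$ and guarantees that $H_0\circ\psi_{\kappa,t}^{-1}$ avoids the pole at $z=1$ — and the justification of the square-root branch by the normalization at the origin, which must not be taken for granted.
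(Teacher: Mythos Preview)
Your proof is correct and follows the same overall architecture as the paper's: derive the squared version of \eqref{Ext} from the characteristics equation \eqref{Dyn}, justify the square root, and apply the Herglotz representation theorem. The one substantive difference is in how the square-root/Herglotz step is handled. The paper invokes the known atom $\nu_{\kappa,t}\{1\}=|\kappa|$ (Corollary~4.5 in \cite{Dem-Hmi}) to factor
\[
[H_{\kappa,t}(z)]^2-\kappa^2[H_0(z)]^2=\bigl(H_{\kappa,t}(z)-|\kappa|H_0(z)\bigr)\bigl(H_{\kappa,t}(z)+|\kappa|H_0(z)\bigr)
\]
as a product of two Herglotz transforms of positive measures, so the expression avoids $(-\infty,0]$ and its principal square root is itself Herglotz. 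You instead observe directly that $H_0\circ\psi_{\kappa,t}^{-1}$ maps into the open right half-plane because $\psi_{\kappa,t}^{-1}(\mathbb{D})=\Lambda_{\kappa,t}\subset\mathbb{D}$ by Proposition~1, and fix the branch by the normalization at the origin. Your route is more self-contained in that it avoids the external spectral input about the atom at $z=1$; the paper's route, by contrast, shows independently that the explicit right-hand side is a Herglotz function, which is what allows \eqref{Ext} to be read later as an analytic \emph{definition} of $\psi_{\kappa,t}^{-1}$ on $\mathbb{D}$ without first appealing to Proposition~1 (see the paragraph preceding Lemma~\ref{Lemme1}).
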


\begin{proof} 
Rewrite \eqref{Dyn} as 
\begin{equation*}
[H_{\kappa,t}(z)]^2 - \kappa^2[H_0(z)]^2 = (1-\kappa^2) [H_{0}(\psi_{\kappa,t}^{-1}(z))]^2,
\end{equation*}
near $z=0$ and note that this equality extends analytically to $\mathbb{D}$. Next, recall from \cite{Dem-Hmi} (see Corollary 4.5) that $\nu_{\kappa,t}\{1\} = |\kappa|$. It follows that 
\begin{equation*}
H_{\kappa,t}(z) - |\kappa|H_0(z), \quad z \in \mathbb{D},
\end{equation*}
is the Herglotz transform of the finite positive measure $\nu_{\kappa,t} - |\kappa|\delta_1$. Obviously, this claim holds true for 
\begin{equation*}
H_{\kappa,t}(z) + |\kappa|H_0(z)
\end{equation*}
and the finite positive measure $\nu_{\kappa,t} + |\kappa|\delta_1$. Since the Herglotz transform of a positive measure in $\mathbb{T}$ is analytic in $\mathbb{D}$ and have positive real part, then  
\begin{equation*}
[H_{\kappa,t}(z)]^2 - \kappa^2[H_0(z)]^2 = [H_{\kappa,t}(z) - |\kappa| H_0(z)][H_{\kappa,t}(z) + |\kappa| H_0(z)]
\end{equation*}
is analytic in $\mathbb{D}$ as well and can not take negative values there. Taking the principal branch of the square root yields \eqref{Ext} and its right-hand side is a Herglotz function. The existence of $N_{\kappa,t}$ follows then from Herglotz representation Theorem (\cite{CMR}, p.32). 
\end{proof}

\begin{rem}
Using \eqref{Pde}, we readily get:
\begin{align*}
\partial_tH_{0}(\psi_{\kappa,t}^{-1}) = \partial_t \frac{\sqrt{H_{\kappa,t}^2 - \kappa^2H_0^2}}{\sqrt{1-\kappa^2}} & = \frac{H_{\kappa,t} \partial_tH_{\kappa,t}}{\sqrt{1-\kappa^2}\sqrt{H_{\kappa,t}^2 - \kappa^2H_0^2}} 
= -zH_{\kappa,t} \partial_z H_{0}(\psi_{\kappa,t}^{-1}),
\end{align*}
which is also referred to as the radial L\"owner equation with driving function $H_{\kappa,t}$ (see the bottom of p.97-98 in \cite{Law}). On the other hand, the relation between L\"owner equations and free convolution semi-groups has been investigated in \cite{Bau}. According to Theorem 3 there, $N_{\kappa,t}$ is the image of $\nu_{\kappa,t}$ under the map induced by the above radial L\"owner equation.  
\end{rem}

\begin{rem}
The probability measure $N_{\kappa,t}$ is the so-called Alekasandrov-Clark measure associated with $\psi_{\kappa,t}^{-1}$ at $z=1$ (\cite{CMR}, p. 201-202). It is invariant under conjugation since
\begin{equation*}
\overline{H_{0}\left(\psi_{\kappa,t}^{-1}(z)\right)} = H_{0}(\psi_{\kappa,t}^{-1}(\overline{z}))
\end{equation*}
and satisfies $N_{\kappa,0} = \delta_1, N_{\kappa,\infty} = d\theta/(2\pi)$. More generally, the Aleksandrov-Clark probability measure $N_{\kappa,t,\zeta}$ corresponding to $\zeta \in \mathbb{T}$ is defined by its Herglotz transform as (see Prop. 9.1.6 in \cite{CMR}):
\begin{equation*}
\frac{\zeta + \psi_{\kappa,t}^{-1}(z)}{\zeta - \psi_{\kappa,t}^{-1}(z)} := \int_{\mathbb{T}}\frac{w+z}{w-z}N_{\kappa,t, \zeta}(dw).
\end{equation*}
\end{rem}

If $\kappa = 0$, then $N_{0,t} = \nu_{0,t}$ is the spectral distribution of $Y_{2t}$ which is known to have a continuous density (see \cite{Biane} for further details). Otherwise, we can prove a weaker result which follows from the following technical proposition. 
\begin{pro}\label{prop3}
Consider the set 
\begin{equation*}
\{\Re(a) \geq 0, \, |\phi_{\kappa,t}(a)| < 1\}
\end{equation*}
where $\phi_{\kappa,t}$ is given by \eqref{Func1} and denote $G_{\kappa,t}$ its connected component containing $a=1$. Then, the boundary of $G_{\kappa,t}$ is contained in a strip $\{0 \leq \Re(a) \leq A_{2t}\}$ for some $A_t > 1$ depending only on $t$ and bounded values of $|\Im(a)|$ near the imaginary axis . 
\end{pro}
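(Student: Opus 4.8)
The plan is to transport Proposition~1 to the variable $a$ and then estimate the resulting region. Since $z\mapsto a(H_0(z))$ is a biholomorphism of $\mathbb{D}$ onto $\Omega_\kappa:=\{\Re a>0\}\setminus(0,|\kappa|]$ sending $0$ to $1$, and $\psi_{\kappa,t}=\phi_{\kappa,t}\circ(a\circ H_0)$, Proposition~1 says precisely that $\phi_{\kappa,t}$ maps the connected component of $\{a\in\Omega_\kappa:|\phi_{\kappa,t}(a)|<1\}$ containing $a=1$ biholomorphically onto $\mathbb{D}$; this component has the same closure as $G_{\kappa,t}$, and $a=1$ is its only point over $0$ since, writing $\phi_{\kappa,t}=\alpha\circ q_{\kappa,t}$ with $q_{\kappa,t}(a):=\frac{a^2}{a^2-\kappa^2}\alpha^{-1}[\xi_{2t}(a)]$, one has $q_{\kappa,t}(a)=0\iff\xi_{2t}(a)=0\iff a=1$. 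Because $|\phi_{\kappa,t}(a)|<1$ is equivalent to $q_{\kappa,t}(a)\notin[1,\infty)$, the boundary of $G_{\kappa,t}$ lies on $\{q_{\kappa,t}\in[1,\infty]\}\cup i\mathbb{R}$, so it suffices to bound $\overline{G_{\kappa,t}}$; I bound $\Re a$ first (uniformly in $\kappa$) and then $\Im a$.

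For $\Re a$: first, a crude estimate gives $\phi_{\kappa,t}(a)\to0$ as $\Re a\to+\infty$, uniformly in $\kappa$ and at rate $O(e^{-t\Re a})$. Indeed $\bigl|\tfrac{a-1}{a+1}\bigr|^2=1-\tfrac{4\Re a}{|a+1|^2}\ge1-\tfrac4{\Re a}$, hence $|\xi_{2t}(a)|\ge(1-4/\Re a)^{1/2}e^{t\Re a}\to\infty$; consequently $|\alpha^{-1}[\xi_{2t}(a)]|\le\frac{4|\xi_{2t}(a)|}{(|\xi_{2t}(a)|-1)^2}\to0$, and since $\bigl|\tfrac{a^2}{a^2-\kappa^2}\bigr|\le2$ for $|a|\ge\sqrt2$ and $|\alpha(w)|\le|w|$ for $|w|\le1$, we obtain $|\phi_{\kappa,t}(a)|\le|q_{\kappa,t}(a)|\le Ce^{-t\Re a}$ once $\Re a$ exceeds a threshold $A'(t)$ depending only on $t$. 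On the other hand, inserting $a(y)^2=\kappa^2+(1-\kappa^2)y^2$ and $\phi_{\kappa,t}^{-1}(w)=a\bigl(H_0(\psi_{\kappa,t}^{-1}(w))\bigr)$ into \eqref{Ext} gives the explicit identity
\[
\bigl(\phi_{\kappa,t}^{-1}(w)\bigr)^2=[H_{\kappa,t}(w)]^2-\frac{4\kappa^2w}{(1-w)^2},\qquad w\in\mathbb{D};
\]
since $H_{\kappa,t}$ is Herglotz with $H_{\kappa,t}(0)=1$ one has $|H_{\kappa,t}(w)|\le\frac{1+|w|}{1-|w|}$, whence $\sup_{|w|\le1/2}|\phi_{\kappa,t}^{-1}(w)|\le5$ independently of $\kappa$. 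Enlarging $A'(t)$ so that the rate above forces $|\phi_{\kappa,t}(a)|\le1/2$ whenever $\Re a\ge A'(t)$: if such an $a$ belonged to $G_{\kappa,t}$, then $|a|=\bigl|\phi_{\kappa,t}^{-1}(\phi_{\kappa,t}(a))\bigr|\le5$. Hence $G_{\kappa,t}\subseteq\{\Re a\le A_{2t}\}$ with $A_{2t}$ depending only on $t$, and a fortiori $\partial G_{\kappa,t}\subseteq\{0\le\Re a\le A_{2t}\}$.

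For $\Im a$: along the imaginary axis $\xi_{2t}(iy)$ is unimodular, so $\alpha^{-1}[\xi_{2t}(iy)]=\sec^2(\theta(y)/2)\ge1$ with $\theta(y)=\pi-2\arctan y+ty$, and $q_{\kappa,t}(iy)=\tfrac{y^2}{y^2+\kappa^2}\sec^2(\theta(y)/2)$ is real, positive, and $\ge1$ for every $y$ outside a sequence of gaps — centred where $\theta(y)\in2\pi\mathbb{Z}$ — of lengths $O(1/|y|)$. Thus, away from these shrinking gaps, all of $i\mathbb{R}$ lies in the bad set $\{q_{\kappa,t}\in[1,\infty]\}$, while a local analysis near each gap $iy_k$ shows that for $|y_k|$ large the set $\{q_{\kappa,t}\in[1,\infty)\}$ carries, inside $\{\Re a>0\}$, a short arc screening that gap. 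These pieces together enclose the origin-component $G_{\kappa,t}$ on the side $|\Im a|\to\infty$, so $G_{\kappa,t}$ has bounded imaginary part. Combined with the preceding paragraph, $\partial G_{\kappa,t}$ lies in $\{0\le\Re a\le A_{2t}\}$ with $|\Im a|$ bounded near the imaginary axis, which is the assertion.

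The main obstacle is this last paragraph. The soft argument used for $\Re a$ does not apply near $i\mathbb{R}$: properness of $\phi_{\kappa,t}$ on $G_{\kappa,t}$ only tells us that a sequence $a_n\in G_{\kappa,t}$ with $|\Im a_n|\to\infty$ must satisfy $|\phi_{\kappa,t}(a_n)|\to1$, hence $|\xi_{2t}(a_n)|\to1$, hence $\Re a_n\to0$, so that the one remaining way $G_{\kappa,t}$ could be unbounded is by escaping to $i\infty$ along, or just off, the imaginary axis. Excluding this genuinely requires the explicit description of $\{q_{\kappa,t}\in[1,\infty]\}$ near $i\mathbb{R}$ sketched above — namely that $|\xi_{2t}(iy)|\equiv1$ forces $q_{\kappa,t}(iy)\ge1$ except on fast-shrinking gaps, together with the location of the bad arcs of $\{q_{\kappa,t}\in[1,\infty)\}$ just inside $\{\Re a>0\}$ — rather than any compactness; verifying that these curves do in fact close off $G_{\kappa,t}$ is the delicate point of the proof.
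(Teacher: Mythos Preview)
Your treatment of the strip bound on $\Re a$ is correct but more elaborate than needed. The paper estimates your $q_{\kappa,t}$ (written $g_{\kappa,t}$ there) directly: from $|\xi_{2t}(a)|\ge|\xi_{2t}(\Re a)|>1$ for $\Re a>b_{2t}$ one gets $|g_{\kappa,t}(a)|\le 8e^{t\Re a}/(|\xi_{2t}(\Re a)|-1)^2\to 0$ uniformly in $\kappa$ and $\Im a$, and since $\partial G_{\kappa,t}\subset\{g_{\kappa,t}\in[1,\infty)\}\subset\{|g_{\kappa,t}|\ge 1\}$ the strip bound on the boundary is immediate. Your detour through \eqref{Ext} and the Herglotz bound on $H_{\kappa,t}$ is valid but not needed for this part.

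The acknowledged gap is the second assertion, and there your route diverges from the paper's. Your observation that $|\xi_{2t}(iy)|\equiv 1$ makes $q_{\kappa,t}(iy)=\tfrac{y^2}{y^2+\kappa^2}\sec^2(\theta(y)/2)$ real is neat, but it only describes the \emph{restriction} to the axis --- where $\Im g_{\kappa,t}$ vanishes trivially --- and does not locate the boundary curves just inside $\{\Re a>0\}$. The paper instead divides $\Im g_{\kappa,t}(x+iu)=0$ by the trivial factor $xu$ and sends $x\to 0^+$; this limit (essentially $\partial_x\Im g_{\kappa,t}\big|_{x=0}$) controls which branches of $\{\Im g_{\kappa,t}=0\}\supset\partial G_{\kappa,t}$ emanate from $i\mathbb{R}$ into the half-plane. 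After a lengthy expansion and half-angle manipulations the paper obtains an explicit factorization
\[
\Bigl(u\cos\tfrac{tu}{2}-\sin\tfrac{tu}{2}\Bigr)\cdot F_2(u)=0,
\]
with $F_2$ an explicit polynomial--trigonometric expression, argues that the first factor vanishes on a totally disconnected set while the second is unbounded for large $u$, and then invokes connectedness of $G_{\kappa,t}$. Your ``screening'' claim --- that for large $|y_k|$ short arcs of $\{q_{\kappa,t}\in[1,\infty)\}$ inside $\{\Re a>0\}$ close off each gap near $iy_k$ --- is precisely what this normal-derivative computation is meant to supply; without carrying out an analysis at that level your sketch does not establish the statement.
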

\begin{proof}
Consider the map: 
\begin{equation*}
g_{\kappa,t}(a) := \frac{a^2}{a^2-\kappa^2}\alpha^{-1}[\xi_{2t}(a)], \quad \Re(a) \geq 0,
\end{equation*}
and for each $t$, denote $b_{2t} > 1$ the unique positive real satisfying
\begin{equation*}
\xi_{2t}(a) = \frac{a-1}{a+1}. 
\end{equation*}
It is clear that $b_{2t} = H_0(z_{0,t})$  and we already know from \cite{Biane} that $|\xi_{2t}(a)| > 1$ for $\Re(a) > b_{2t}$. Hence,
\begin{align*}
|g_{\kappa,t}(a)| &= \frac{|a|^2}{|a+1|^2} \frac{|a^2-1|}{|a^2-\kappa^2|} \frac{4e^{t\Re(a)}}{|1+\xi_{2t}(a)|^2}
\\& \leq \left(1+\frac{(1-\kappa^2)}{|a^2-\kappa^2|}\right) \frac{4e^{t\Re(a)}}{(|\xi_{2t}(a)|-1)^2}
\end{align*}
where we used the inequality $|1+\xi_{2t}(a)| \geq |1-|\xi_{2t}(a)|$ and the triangular inequality. Using further the inequalities 
\begin{eqnarray*}
|a^2-\kappa^2| & > (\Re(a) - \kappa^2) > (1-\kappa^2), \\
& |\xi_{2t}(a)| \geq |\xi_{2t}(\Re(a))| > 1,
\end{eqnarray*}
we get 
\begin{align*}
|g_{\kappa,t}(a)| \leq  \frac{8e^{t\Re(a)}}{(|\xi_{2t}(\Re(a))|-1)^2}.
\end{align*}
Since the right hand side of the last inequality tends to zero when $\Re(a) \rightarrow +\infty$, then $|g_{\kappa,t}(a)|$ does so uniformly in $\Im(a)$ and $\kappa$. Thus, there exists a positive real number $A_{2t} > b_{2t}$ independent from $\kappa$ and such that 
$|g_{\kappa,t}(a)| < 1, \Re(a) \geq A_{2t}$. Since 
\begin{equation*}
\partial G_{\kappa,t} = \{g_{\kappa,t}(a) \in [1,\infty[\} \subset \{|g_{\kappa,t}(a)| \geq 1\},
\end{equation*}
then the first statement is clear. 

Next, consider the equation  
\begin{equation}\label{Bound1}
\Im(g_{\kappa,t}(a)) = 0
\end{equation} 
and observe that $\partial G_{\kappa,t} \subset \{\Im(g_{\kappa,t}(a)) = 0\}$. Observe also that $\Im(g_{\kappa,t}(a)) =0 \Leftrightarrow  \Im(g_{\kappa,t}(\overline{a})) = 0$ so that we may restrict \eqref{Bound1} to $\{\Re(a) \geq 0, \Im(a) \geq 0\}$. Now, write 
\begin{align*}
g_{\kappa,t}(a) &= \frac{4a^2(a^2-1)e^{ta}}{(a^2-\kappa^2)(a+1+(a-1)e^{ta})^2} 
\\& = 4 \frac{a^2e^{ta} (|a|^4+\kappa^2 - \kappa^2a^2 - \overline{a}^2)((\overline{a}+1)^2 + (\overline{a}-1)^2e^{2t\overline{a}} + 2(\overline{a}^2-1)e^{t\overline{a}})}{\left|(a^2-\kappa^2)(a+1+(a-1)e^{ta})^2\right|^2}
\end{align*}
and substitute $a := x+iu, u \geq 0$. Since $x=0$ and $u=0$ are trivial solutions to \eqref{Bound1}, then this equation is equivalent to
\begin{align*}
 \frac{\Im[\Phi_{\kappa,2t}(x+iu)]}{ux}=0, 
\end{align*}
on $\{x \neq 0, u \neq 0\}$, which is expanded as
\begin{align}\label{EQ2}
0 = &-4 \kappa^2  \left((x^2+u^2+1)^2 - 4x^2\right) e^{tx}
\\&+\frac{\left(\kappa^2-(\kappa^2+1) (x^2-u^2)+(x^2+u^2)^2\right) }{xu}\left[2 x \left(e^{2 t x}+1\right) \left(\left(x^2+u^2\right) \sin (t u)+u \cos (t u)\right)\right. \nonumber
\\&\left.+\left(1-e^{2 t x}\right) \left(\left(\left(x^2+u^2\right)^2+x^2-u^2\right) \sin (t u)+2 u \left(x^2+u^2\right) \cos (t u)\right)\right]\nonumber
\\&+2 \left(1-\kappa^2\right) \left[\left(1-e^{2 t x}\right) \left(2 x \left(x^2+u^2\right) \cos (t u)-2 x u \sin (t u)\right)\right.\nonumber
\\&\left.+\left(e^{2 t x}+1\right) \left(\left(\left(x^2+u^2\right)^2+x^2-u^2\right) \cos (t u)-2 u \left(x^2+u^2\right) \sin (t u)\right)\right]. \nonumber
\end{align}

Taking the limit as $x\rightarrow0^+$ in \eqref{EQ2}, we get
\begin{align*}
0 = &-4 \kappa^2  (u^2+1)^2 
\\&+2(1+u^2)(\kappa^2+u^2) \left[u(t+2-tu^2) \sin (t u)+2 (1-tu^2) \cos (t u)\right]
\\&+4u^2 (1-\kappa^2) \left[(u^2-1) \cos (t u)-2 u \sin (t u)\right]
\end{align*}

\begin{align*}
= &-4 \kappa^2  (u^2+1)^2 
\\&+\left[2u(1+u^2)(\kappa^2+u^2) (t+2-tu^2)-8u^3 (1-\kappa^2) \right]\sin (t u)
\\&+\left[4u^2 (1-\kappa^2) (u^2-1)+4(1+u^2)(\kappa^2+u^2)  (1-tu^2)\right]\cos (t u).
\end{align*}
Using the basic trigonometric identities:
\begin{align*}
1=\cos \left(\frac{t u}{2}\right)^2+\sin \left(\frac{t u}{2}\right)^2 ,\quad \sin (t u)=2\sin \left(\frac{t u}{2}\right) \cos \left(\frac{t u}{2}\right), \quad \cos (t u)=\cos \left(\frac{t u}{2}\right)^2-\sin \left(\frac{t u}{2}\right)^2,
\end{align*}
we further derive
\begin{align*}
0 = &-4 \kappa^2  (u^2+1)^2 \left( \cos \left(\frac{t u}{2}\right)^2+\sin \left(\frac{t u}{2}\right)^2 \right)
\\&+\left[2u(1+u^2)(\kappa^2+u^2) (t+2-tu^2)-8u^3 (1-\kappa^2) \right]2\sin \left(\frac{t u}{2}\right) \cos \left(\frac{t u}{2}\right)
\\&+\left[4u^2 (1-\kappa^2) (u^2-1)+4(1+u^2)(\kappa^2+u^2)  (1-tu^2)\right]\left(\cos \left(\frac{t u}{2}\right)^2-\sin \left(\frac{t u}{2}\right)^2 \right),
\\ &= \left[4u^2 (1-\kappa^2) (u^2-1)+4(1+u^2)(\kappa^2+u^2)  (1-tu^2)-4 \kappa^2  (u^2+1)^2\right] \cos \left(\frac{t u}{2}\right)^2
\\&+2\left[2u(1+u^2)(\kappa^2+u^2) (t+2-tu^2)-8u^3 (1-\kappa^2) \right]\sin \left(\frac{t u}{2}\right) \cos \left(\frac{t u}{2}\right)
\\&-\left[4u^2 (1-\kappa^2) (u^2-1)+4(1+u^2)(\kappa^2+u^2)  (1-tu^2)+4 \kappa^2  (u^2+1)^2\right]\sin \left(\frac{t u}{2}\right)^2.
\end{align*}
Equivalently,
\begin{align*}
0 = &\left[ (1-\kappa^2) (u^2-1)+(1+u^2)  (1-tu^2-(t+1)\kappa^2)\right]u^2 \cos \left(\frac{t u}{2}\right)^2
\\&+\left[(1+u^2)(\kappa^2+u^2) (t+2-tu^2)-4u^2 (1-\kappa^2) \right]u\cos \left(\frac{t u}{2}\right) \sin \left(\frac{t u}{2}\right)
\\&-\left[u^2 (1-\kappa^2) (u^2-1)+(1+u^2)(u^2 (1-tu^2)+ \kappa^2  (2+u^2-tu^2))\right]\sin \left(\frac{t u}{2}\right)^2
\\ & = -\left[ \kappa^2 \left(t u^2+t+2 u^2\right)+u^2 \left(t u^2+t-2\right)\right]u^2 \cos \left(\frac{t u}{2}\right)^2
\\&-\left[\kappa^2 (tu^4-6u^2-t-2)+u^2 (tu^4-2u^2-t+2) \right]u\cos \left(\frac{t u}{2}\right) \sin \left(\frac{t u}{2}\right)
\\&+\left[\kappa^2 \left(t u^4+(t-4) u^2-2\right)+u^4 \left(t u^2+t-2\right)\right]\sin \left(\frac{t u}{2}\right)^2.
\\ &= - \left(u \cos \left(\frac{tu}{2}\right)-\sin \left(\frac{t u}{2}\right)\right) \left[u \left(\kappa^2 \left(t u^2+t+2 u^2\right)+u^2 \left(t u^2+t-2\right)\right) \cos \left(\frac{t u}{2}\right)\right.
\\&\left.+\left(\kappa^2 \left(t u^4+(t-4) u^2-2\right)+u^4 \left(t u^2+t-2\right)\right) \sin \left(\frac{t u}{2}\right)\right].
\end{align*}
The first factor of the last expression vanishes on an unbounded denumerable (hence totally disconnected) set while the second one is unbounded for large $u > 0$. Since $G_{\kappa,t}$ is connected, the second statement of the proposition follows and the proof is complete. 
\end{proof}

\begin{cor}
Let $H^{\infty}$ be the Hardy space of bounded functions in $\mathbb{D}$: 
\begin{equation*}
H^{\infty} := \{f: \mathbb{D} \rightarrow \mathbb{C}, \, \sup_{z \in \mathbb{D}} |f(z)| < \infty\}.
\end{equation*}
Then, $N_{\kappa,t}, t > 0, \kappa \neq 0$ is absolutely continuous with respect to Lebesgue measure on $\mathbb{T}$ and its density belongs to $L^{\infty}(\mathbb{T})$. 
\end{cor}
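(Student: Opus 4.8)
The plan is to deduce the corollary from Proposition~\ref{prop3} together with the classical dictionary between bounded Herglotz functions and absolutely continuous measures on $\mathbb{T}$ with essentially bounded density (the case $\kappa=0$ having already been disposed of above). Recall that $H_0\circ\psi_{\kappa,t}^{-1}$ is, by \eqref{Ext}, the Herglotz transform of $N_{\kappa,t}$; so I would first observe that it suffices to prove $H_0\circ\psi_{\kappa,t}^{-1}\in H^{\infty}$. Indeed, if this holds then $\Re\big(H_0\circ\psi_{\kappa,t}^{-1}\big)$ is a bounded positive harmonic function in $\mathbb{D}$, hence the Poisson integral of a nonnegative $h\in L^{\infty}(\mathbb{T})$; moreover the Poisson integral of the singular part of $N_{\kappa,t}$ would have radial limit $+\infty$ at almost every point with respect to that singular part, so the boundedness forces the singular part to vanish. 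Consequently $N_{\kappa,t}=h\,d\theta/(2\pi)$ with $h\in L^{\infty}(\mathbb{T})$, which is the assertion.

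It then remains to establish $H_0\circ\psi_{\kappa,t}^{-1}\in H^{\infty}$. Since $H_0(w)=(1+w)/(1-w)$ is bounded on any subset of $\mathbb{D}$ lying at a positive distance from $z=1$, and since $\psi_{\kappa,t}^{-1}(\mathbb{D})=\Lambda_{\kappa,t}$ (the flow mapping $\Lambda_{\kappa,t}$ bijectively onto $\mathbb{D}$), this amounts to showing that $\Lambda_{\kappa,t}$ stays at a positive distance from $z=1$. To prove this I would pass to the variable $a=a(H_0(z))$ used to define $\phi_{\kappa,t}$ in \eqref{Func1}: this map is injective on $\mathbb{D}$, sends $z=0$ to $a=1$ and $z=1$ to $a=\infty$, and carries $\Lambda_{\kappa,t}$ into the connected component $G_{\kappa,t}$ of $\{\Re(a)\geq0,\ |\phi_{\kappa,t}(a)|<1\}$ containing $a=1$. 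Now Proposition~\ref{prop3} tells us that $\partial G_{\kappa,t}$ is bounded, and a connected subset of the half-plane $\{\Re(a)\geq0\}$ with bounded topological boundary must itself be bounded (otherwise it would contain the complement of a large disc, which meets $\{\Re(a)<0\}$); say $G_{\kappa,t}\subseteq\{|a|<R\}$. Unwinding the change of variable, $|a(H_0(z))|<R$ on $\Lambda_{\kappa,t}$ gives $(1-\kappa^{2})|H_0(z)|^{2}<R^{2}+\kappa^{2}$, i.e. $|H_0(z)|\leq M:=\sqrt{(R^{2}+\kappa^{2})/(1-\kappa^{2})}$ there, whence $|1-z|\geq 2/(M+1)$ on $\Lambda_{\kappa,t}$ by the triangle inequality applied to $H_0(z)=(1+z)/(1-z)$. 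Therefore $\sup_{z\in\mathbb{D}}\big|H_0(\psi_{\kappa,t}^{-1}(z))\big|\leq M+1<\infty$, as required.

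The step I expect to be the main obstacle is precisely the one invoking Proposition~\ref{prop3}, namely the boundedness of $G_{\kappa,t}$, equivalently the fact that the univalence domain $\Lambda_{\kappa,t}$ does not approach $z=1$; this is where the detailed structure of $\phi_{\kappa,t}$ and of Biane's map $\xi_{2t}$ genuinely enters, and it has already been carried out. Granting it, the remaining ingredients are soft: the change of variable is mere bookkeeping (one only needs injectivity of $z\mapsto a(H_0(z))$ on $\mathbb{D}$ and the identification of $\psi_{\kappa,t}^{-1}(\mathbb{D})$ with $\Lambda_{\kappa,t}$), and the passage from a bounded Herglotz function to an absolutely continuous measure with $L^{\infty}$ density is a standard fact from the boundary theory of Poisson integrals, for which I would point to \cite{CMR}.
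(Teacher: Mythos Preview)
Your overall route coincides with the paper's: reduce the corollary to the statement $H_0\circ\psi_{\kappa,t}^{-1}\in H^{\infty}$, which in turn is equivalent to $\Lambda_{\kappa,t}$ lying at positive distance from $z=1$, and then pass to the $a$-variable via $a=a(H_0(z))$ to invoke Proposition~\ref{prop3}. The reduction steps and the bookkeeping you give (injectivity of $z\mapsto a(H_0(z))$, image of $\Lambda_{\kappa,t}$ landing in $G_{\kappa,t}$, and the Poisson-integral argument) are fine.

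The gap is in your reading of Proposition~\ref{prop3}. The proposition does \emph{not} assert that $\partial G_{\kappa,t}$ is bounded; it makes two separate claims: (i) $\partial G_{\kappa,t}\subset\{0\leq\Re(a)\leq A_{2t}\}$, and (ii) $|\Im(a)|$ is bounded on $\partial G_{\kappa,t}$ \emph{only near the imaginary axis}. Nothing in the statement (or its proof) controls $|\Im(a)|$ in the middle of the strip, so your sentence ``Proposition~\ref{prop3} tells us that $\partial G_{\kappa,t}$ is bounded'' is unjustified, and the topological step that follows collapses. In fact the proof of the proposition shows that the whole half-plane $\{\Re(a)>A_{2t}\}$ sits inside $\{|\phi_{\kappa,t}|<1\}$, so a priori $G_{\kappa,t}$ could well be unbounded; ruling this out is precisely the content of the corollary, not a hypothesis one may read off.

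What the paper actually does is translate the two parts of Proposition~\ref{prop3} back to the $z$-disc via the geometry of $z\mapsto a(H_0(z))$: part~(i) says that the preimage of $\{\Re(a)>A_{2t}\}$ is a cone-shaped region with vertex at $z=1$, while part~(ii), combined with the fact that the imaginary axis is the image of $\mathbb{T}\setminus\{1\}$, forbids $\overline{\Lambda_{\kappa,t}}$ from containing arcs of $\mathbb{T}$ accumulating at $z=1$. A continuity argument then excludes bands linking these arcs to the cone, and the two exclusions together show $\partial\Lambda_{\kappa,t}$ stays away from $z=1$. To repair your proof you need this two-pronged geometric argument (or an independent bound on $|\Im(a)|$ across the whole strip, which the paper does not supply).
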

\begin{proof}
With regard to \eqref{Ext} and according to Theorem 1.7, p.208, in \cite{Con}, this corollary is equivalent to $H_{0}(\psi_{\kappa,t}^{-1}) \in H^{\infty}$ which is in turn equivalent to the fact that the boundary $\Lambda_{\kappa, t}$ is at a positive distance from $z=1$. To prove the last claim, we reformulate the previous proposition in the $z$-variable since
\begin{equation*}
\psi_{\kappa,t}(z) = \phi_{\kappa,t}(a(H_0(z)), \quad z \in \overline{\mathbb{D}},
\end{equation*}
which we can achieve using some elementary geometrical considerations. More precisely, the inverse image of the strip $\{\Re(a) > A_{2t}\}$ under the map $z \mapsto a(H_0(z))$ is a region in $\mathbb{D} \cup \{1\}$ which lies inside a symmetric (with respect to the real axis) cone with vertex at $z=1$. Moreover, since the inverse image of the imaginary axis under this map is $\mathbb{T} \setminus \{1\}$, then the second statement of the previous lemma shows that $\overline{\Lambda_{\kappa, t}}$ does not contain two symmetric and disjoint arcs in $\mathbb{T}$ separated only by $z=1$. By continuity, $\overline{\Lambda_{\kappa, t}}$ does not contain two symmetric and disjoint bands near these arcs and intersecting the aforementioned cone (and not the segment $(-1,1)$, see the figure below). In summary, the boundary of $\Lambda_{\kappa, t}$ can not approach $z=1$ in $\overline{\mathbb{D}}$.   
\begin{figure}[!h]
\centering
\includegraphics[width=0.2\textwidth]{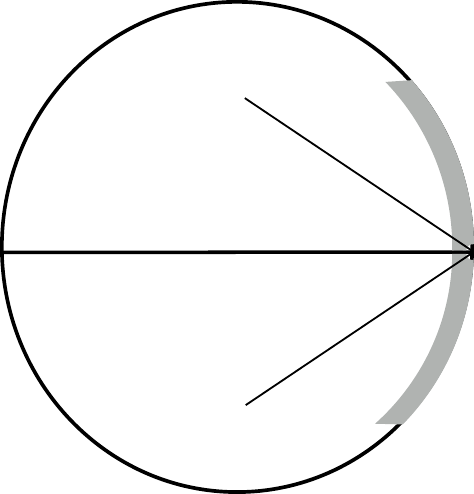}
\caption{}
\end{figure}
\end{proof}

We close this section by writing a direct proof of the univalence of $\psi_{\kappa,t}^{-1}$ in $\mathbb{D}$ without appealing to the theory of L\"owner equations. To proceed, note first that the right-hand side of \eqref{Ext} can be taken as an analytic extension of 
$\psi_{\kappa,t}^{-1}$ from a neighborhood of $z=0$ to $\mathbb{D}$. In particular, the equality 
\begin{equation*}
\psi_{\kappa, t}^{-1}(\psi_{\kappa,t}(z)) = z 
\end{equation*}
holds on $\Lambda_{\kappa, t}$ whence we infer that $\psi_{\kappa,t}$ is injective in $\Lambda_{\kappa,t} \subset \psi_{\kappa,t}^{-1}(\mathbb{D})$. Hence, it remains to prove that $\psi_{\kappa, t}(\Lambda_{\kappa,t}) = \mathbb{D}$. To this end, set
\begin{equation}
 K_{\kappa,2t}(z) :=\sqrt{[H_{\kappa,t}(z)]^2 + \kappa^2(1-[H_0(z)]^2)} = a[H_0(\psi_{\kappa,t}^{-1}(z))],
\end{equation}
which defines an analytic map in $\mathbb{D}$.

\begin{lem}\label{Lemme1}
Near the origin $z=0$, the equality $\psi_{\kappa,t}(\psi_{\kappa,t}^{-1}(z)) = z$ is equivalent to
\begin{equation}\label{A1}
\xi_{2t}(K_{\kappa,2t}(z))=\frac{H_0(z)K_{\kappa,2t}(z)-H_{\kappa,t}(z)}{H_0(z)K_{\kappa,2t}(z)+H_{\kappa,t}(z)}.
\end{equation}
\end{lem}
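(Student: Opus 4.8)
The plan is to unfold the definitions on both sides, reduce the identity to an algebraic relation between $H_0(z)$, $H_{\kappa,t}(z)$ and $K_{\kappa,2t}(z)$, and then deal carefully with the fact that $\alpha^{-1}$ is a rational map of degree two. Throughout, $z$ stays in a fixed neighbourhood of the origin and all branches are the ones fixed in Section 2; I abbreviate $y:=H_0(z)$, $H:=H_{\kappa,t}(z)$, $A:=K_{\kappa,2t}(z)$. By the identity $K_{\kappa,2t}(z)=a[H_0(\psi_{\kappa,t}^{-1}(z))]$ recorded just above the statement and by \eqref{Ext}, one has $A^2=H^2+\kappa^2(1-y^2)$, hence
\[ A^2-\kappa^2 = H^2-\kappa^2 y^2, \]
and these two relations are the only ``arithmetic'' facts the argument will use.

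Since $\psi_{\kappa,t}=\phi_{\kappa,t}\circ a\circ H_0$ near the origin and $a(H_0(\psi_{\kappa,t}^{-1}(z)))=A$, the equation $\psi_{\kappa,t}(\psi_{\kappa,t}^{-1}(z))=z$ reads $\phi_{\kappa,t}(A)=z$, that is, by \eqref{Func1}, $\alpha\bigl(\tfrac{A^2}{A^2-\kappa^2}\alpha^{-1}[\xi_{2t}(A)]\bigr)=z$. Applying $\alpha^{-1}$ (a genuine left inverse of $\alpha$ on its domain, and all quantities stay in the relevant small neighbourhoods) turns this into
\[ \frac{A^2}{A^2-\kappa^2}\,\alpha^{-1}[\xi_{2t}(A)] = \alpha^{-1}(z). \]
Writing $z=(y-1)/(y+1)$ gives $\alpha^{-1}(z)=4z/(1+z)^2=1-1/y^2$, so the last display is equivalent to $\alpha^{-1}[\xi_{2t}(A)]=(A^2-\kappa^2)(y^2-1)/(A^2 y^2)$. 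On the other hand, a direct computation with $\alpha^{-1}(v)=4v/(1+v)^2$ applied to $\zeta_0:=(yA-H)/(yA+H)$ gives $\alpha^{-1}(\zeta_0)=(y^2A^2-H^2)/(y^2A^2)$, and $y^2A^2-H^2=(y^2-1)(A^2-\kappa^2)$ by the relation above; hence $\alpha^{-1}(\zeta_0)$ equals the very same expression. Thus $\psi_{\kappa,t}(\psi_{\kappa,t}^{-1}(z))=z$ is equivalent to $\alpha^{-1}[\xi_{2t}(A)]=\alpha^{-1}(\zeta_0)$.

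It then remains to upgrade $\alpha^{-1}[\xi_{2t}(A)]=\alpha^{-1}(\zeta_0)$ to $\xi_{2t}(A)=\zeta_0$, which is exactly \eqref{A1}; this is the one delicate point. Since $\alpha^{-1}$ has degree two and its two preimages of a value are reciprocal of one another (the product of the roots of $sv^2+(2s-4)v+s=0$ equals $1$), one gets a priori only $\xi_{2t}(A)=\zeta_0$ or $\xi_{2t}(A)\,\zeta_0=1$ at each point; as $\xi_{2t}(A)$ and $\zeta_0$ are analytic near $z=0$, exactly one of these holds identically there. Evaluating at $z=0$, where $y=H=A=1$ so that $\zeta_0=0$ and $\xi_{2t}(A)=\xi_{2t}(1)=0$, discards $\xi_{2t}(A)\,\zeta_0\equiv 1$, leaving $\xi_{2t}(A)=\zeta_0$ near the origin. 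Reading the same chain of equivalences backwards --- in particular applying $\alpha$ at the last step, again legitimate near the origin since $\alpha\circ\alpha^{-1}=\mathrm{id}$ on $\mathbb{D}$ --- yields the converse, and the proof is complete. The only genuine obstacle is this branch bookkeeping for $\alpha^{-1}$; the algebraic simplifications are routine once $A^2-\kappa^2=H^2-\kappa^2 y^2$ is noted.
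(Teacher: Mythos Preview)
Your proof is correct and follows essentially the same route as the paper's. Both arguments apply $\alpha^{-1}$ to the identity $\phi_{\kappa,t}(A)=z$, simplify using $A^2-\kappa^2=H^2-\kappa^2 y^2$, and then resolve a two-valued ambiguity by evaluating at $z=0$; the only cosmetic difference is that the paper passes through the relation $\alpha^{-1}(v)=1-1/H_0(v)^2$ and takes a square root of $H_0^2(\xi_{2t}(A))$, whereas you invoke directly that the fibres of $\alpha^{-1}$ consist of reciprocal pairs.
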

\begin{proof}
Recall
\begin{equation}\label{Surj}
\psi_{\kappa,t}(\psi_{\kappa,t}^{-1}(z)) = \alpha\left[\frac{[K_{\kappa,2t}(z)]^2}{[K_{\kappa,2t}(z)]^2-\kappa^2} \alpha^{-1}[\xi_{2t}(K_{\kappa,2t}(z))]\right]=z
\end{equation}
in a neighborhood of $z=0$. Using
\begin{equation*}
\alpha^{-1}(z) = 1 - \frac{1}{H_0^2(z)},
\end{equation*}
then \eqref{Surj} may be written as 
\begin{equation*}
1 - \frac{1}{H_0^2(\xi_{2t}(K_{\kappa,2t}(z)))}  = \frac{([K_{\kappa,2t}(z)]^2 - \kappa^2)([H_0(z)^2]-1)}{[K_{\kappa,2t}(z)]^2[H_0(z)]^2}
\end{equation*}
or equivalently as
\begin{equation*}
H_0^2(\xi_{2t}(K_{\kappa,2t}(z))) =  \frac{[K_{\kappa,2t}(z)]^2[H_0(z)]^2}{[H_{\kappa,t}(z)]^2}.
\end{equation*}
Since $H_0(0) = H_{\kappa,t}(0) = 1$ then $\xi_{2t}(K_{\kappa,2t}(0)) = \xi_{2t}(1) = 0$ so that
\begin{equation*}
H_0(\xi_{2t}(K_{\kappa,2t}(z))) =  \frac{K_{\kappa,2t}(z)H_0(z)}{H_{\kappa,t}(z)}
\end{equation*}
in a neighborhood of $z=0$. The last equality may be written as 
\begin{equation*}
\xi_{2t}(K_{\kappa,2t}(z))=\frac{H_0(z)K_{\kappa,2t}(z)-H_{\kappa,t}(z)}{H_0(z)K_{\kappa,2t}(z)+H_{\kappa,t}(z)}
\end{equation*}
which proves the lemma.
\end{proof}
Though the equivalence stated in lemma \ref{Lemme1} holds in a neighborhood of $z=0$, what we really need to prove that $\psi_{\kappa, t}(\Lambda_{\kappa,t}) = \mathbb{D}$ is that the second equality implies the first one. More precisely, assume there exists $z \in \mathbb{D} \setminus \{0\}$ such that $H_0(z)K_{\kappa,2t}(z)+H_{\kappa,t}(z) = 0$. Then, straightforward computations show that this complex number satisfies
\begin{equation*}
[H_{\kappa,t} - |\kappa|H_0(z)][H_{\kappa,t} + |\kappa|H_0(z)](1-H_0^2(z)) = 0
\end{equation*}
which can not hold since $H_0(z) \neq 1$ and since
\begin{equation*}
\Re[H_{\kappa,t} \pm |\kappa|H_0(z)] > 0. 
\end{equation*}
But $H_0(0)K_{\kappa,2t}(0)+H_{\kappa,t}(0) = 2$ therefore both sides of \eqref{A1} are analytic in the open unit disc $\mathbb{D}$. Reading backward the proof of lemma \ref{Lemme1}, we conclude that 
\begin{equation*}
\frac{[K_{\kappa,2t}(z)]^2}{[K_{\kappa,2t}(z)]^2-\kappa^2} \alpha^{-1}[\xi_{2t}(K_{\kappa,2t}(z))] = \alpha^{-1}(z)
\end{equation*}
for any $z \in \mathbb{D}$, in particular, the range of the left-hand side is exactly $\mathbb{C} \setminus [1,\infty[$. Composing both sides of the last equality with the map $\alpha$ yields $\psi_{\kappa, t}(\Lambda_{\kappa,t}) = \mathbb{D}$.

\section{Consequences}
Here, we derive two corollaries of our preceding results on the spectra of $U_t$ and $J_t$. The first of them is: 
\begin{cor}
For any time $t >0$ and any $\kappa \neq 0$, $z=1$ does not belong neither to the absolutely-continuous spectrum of $\nu_{\kappa,t}$ nor to its continuous singular part. 
\end{cor}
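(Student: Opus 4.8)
The plan is to deduce the statement from the Corollary just proved, namely that $N_{\kappa,t}$ is absolutely continuous with respect to Lebesgue measure on $\mathbb{T}$ with density in $L^\infty(\mathbb{T})$, together with the identity \eqref{Ext}. The key observation is that the boundary behavior of $\nu_{\kappa,t}$ at $z=1$ is controlled by that of $H_{\kappa,t}$, which by \eqref{Ext} is in turn controlled by $H_0(\psi_{\kappa,t}^{-1})$ and by $H_0$ itself near $z=1$. Concretely, from
\begin{equation*}
[H_{\kappa,t}(z)]^2 = (1-\kappa^2)[H_0(\psi_{\kappa,t}^{-1}(z))]^2 + \kappa^2[H_0(z)]^2
\end{equation*}
one reads off the radial limits: as $z = r\zeta \to \zeta$ with $\zeta = e^{i\theta} \in \mathbb{T}\setminus\{1\}$, the term $H_0(z)$ converges to the bounded value $(1+\zeta)/(1-\zeta)$, while $H_0(\psi_{\kappa,t}^{-1}(z))$ has a radial limit for a.e.\ $\zeta$ whose real part is, up to the factor $2\pi$ or a constant, the density of $N_{\kappa,t}$ (Fatou's theorem for Herglotz functions), hence finite a.e. The point is that near $\zeta=1$ everything stays bounded: by the Corollary, $H_0(\psi_{\kappa,t}^{-1}) \in H^\infty$, so it has finite nontangential limits on an arc around $z=1$ and $N_{\kappa,t}$ gives no mass to $\{1\}$; and $H_0$ itself is analytic across $\mathbb{T}$ away from $z=1$.

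First I would recall the two standard criteria for a point $\zeta_0\in\mathbb{T}$ relative to a positive measure $\nu$ on $\mathbb{T}$ with Herglotz transform $H_\nu$: (i) $\zeta_0$ carries an atom of weight $\beta$ iff $\lim_{r\to 1}(1-r)H_\nu(r\zeta_0)$ equals a nonzero multiple of $\beta$; (ii) $\zeta_0$ belongs to the support of the continuous singular part iff $\lim_{r\to1}\Re H_\nu(r\zeta_0) = +\infty$ in a suitable nontangential sense (more precisely the singular part is supported where the symmetric derivative $D\nu(\zeta_0)=+\infty$); and $\zeta_0$ lies in the absolutely continuous part's support (with positive density) iff $\Re H_\nu$ has a finite positive radial limit there. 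Then I would apply these with $\nu=\nu_{\kappa,t}$ and $\zeta_0=1$. Using \eqref{Ext} in the form above, near $z=1$ we have $H_{\kappa,t}(z) - |\kappa|H_0(z) = $ (Herglotz transform of $\nu_{\kappa,t}-|\kappa|\delta_1$), and squaring the identity shows $[H_{\kappa,t}(z)-|\kappa|H_0(z)][H_{\kappa,t}(z)+|\kappa|H_0(z)] = (1-\kappa^2)[H_0(\psi_{\kappa,t}^{-1}(z))]^2$. Since the right side extends to a bounded analytic function near $z=1$ (by the Corollary) and $H_{\kappa,t}(z)+|\kappa|H_0(z)$ blows up at $z=1$ (its real part is at least $|\kappa|\Re H_0(z)\to+\infty$ as $z\to 1$ radially), the factor $H_{\kappa,t}(z)-|\kappa|H_0(z)$ must tend to $0$ as $z\to 1$, in particular $(1-r)[H_{\kappa,t}(r) - |\kappa|H_0(r)]\to 0$, so the measure $\nu_{\kappa,t}-|\kappa|\delta_1$ has no atom at $z=1$ (it has none anyway, being positive with the atom already subtracted — but this reproves it) and, more to the point, $\Re[H_{\kappa,t}(r)-|\kappa|H_0(r)]$ stays bounded, indeed tends to $0$, as $r\to 1^-$; equivalently the part of $\nu_{\kappa,t}$ other than the atom $|\kappa|\delta_1$ has symmetric derivative $0$ at $z=1$ and gives zero mass to every shrinking arc around $1$ faster than its length. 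This simultaneously rules out $z=1$ being in the support of the continuous singular part (which would force that symmetric derivative to be $+\infty$) and being a point of positive a.c.\ density.

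The main obstacle is making the passage from "$H_0(\psi_{\kappa,t}^{-1})\in H^\infty$" to "$\nu_{\kappa,t}$ restricted to a neighborhood of $z=1$ (minus its atom) is absolutely continuous with bounded density, hence $z=1$ is not in the singular support" fully rigorous, rather than just the weaker pointwise statement at $\zeta=1$ alone. I would handle this by noting that $H^\infty$-membership gives finite nontangential boundary values $\Re H_0(\psi_{\kappa,t}^{-1})(\zeta)$ for \emph{every} $\zeta$ in an open arc $I\ni 1$ (not merely a.e.), so via the factorization the density of $\nu_{\kappa,t}$ on $I\setminus\{1\}$ is bounded and the singular part of $\nu_{\kappa,t}$ restricted to $I$ can only sit at $\{1\}$, where it is the atom $|\kappa|\delta_1$; subtracting that atom leaves an a.c.\ measure near $1$, so $z=1\notin \mathrm{supp}(\nu_{\kappa,t})_{\mathrm{sing,c}}$ and $z=1$ is not a Lebesgue point of positive density of the a.c.\ part. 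The only subtlety is that $H_0(z)$ itself is unbounded at $z=1$, so the algebra must be arranged to isolate the atom first (subtract $|\kappa|\delta_1$, i.e.\ work with $H_{\kappa,t}\mp|\kappa|H_0$ as in the proof of Proposition 2) before invoking boundedness; once that is done the conclusion is immediate. I expect this to be short, essentially a corollary of the $H^\infty$ statement combined with the classical Herglotz/Fatou dictionary between boundary behavior of the Herglotz transform and the Lebesgue decomposition of the measure (as in \cite{CMR}).
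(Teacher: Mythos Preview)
Your core argument is correct and is essentially the paper's own proof: factor
\[
(1-\kappa^2)\,[H_0(\psi_{\kappa,t}^{-1}(z))]^2 \;=\; [H_{\kappa,t}(z)-|\kappa|H_0(z)]\,[H_{\kappa,t}(z)+|\kappa|H_0(z)],
\]
note that the left side has a finite radial limit at $z=1$ while the second factor on the right blows up, conclude that $H_{\kappa,t}(z)-|\kappa|H_0(z)\to 0$ as $z\to 1^-$, and then invoke the standard dictionary between radial limits of the Poisson transform and the Lebesgue decomposition (Proposition~1.3.11 and eq.~(1.8.8) in \cite{CMR}). The only cosmetic difference is that the paper obtains the finite limit on the left directly from $\psi_{\kappa,t}^{-1}(1)=z_{\kappa,t}$, giving the explicit value $(1-\kappa^2)[H_0(z_{\kappa,t})]^2$, whereas you appeal to the $H^\infty$ Corollary for mere boundedness; either route works.

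One caveat about your final paragraph: you claim that $H^\infty$-membership yields finite nontangential boundary values at \emph{every} point of an open arc. That is false in general---Fatou's theorem only gives a.e.\ nontangential limits for $H^\infty$ functions---so the sentence ``so via the factorization the density of $\nu_{\kappa,t}$ on $I\setminus\{1\}$ is bounded and the singular part of $\nu_{\kappa,t}$ restricted to $I$ can only sit at $\{1\}$'' is not justified as written. Fortunately this digression is unnecessary: the statement of the corollary concerns only the single point $z=1$, and the pointwise radial argument you gave just before (the one the paper also uses) already proves it. Drop the arc discussion and the proof is complete.
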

\begin{proof}
Recall from \cite{Dem-Hmi}, Theorem 4.1, that $z_{\kappa,t} \in (0,1)$ is the right real boundary of $\Lambda_{\kappa,t}$. In particular, $\psi_{\kappa,t}(z_{\kappa,t}) = 1 \Leftrightarrow \psi_{\kappa,t}^{-1}(1) = z_{\kappa,t}$ so that \eqref{Ext} yields:
\begin{equation*}
H_0(z_t) = \lim_{z \rightarrow 1-}[H_{\kappa,t}(z) - |\kappa| H_0(z)][H_{\kappa,t}(z) + |\kappa| H_0(z)],
\end{equation*}  
where $z \rightarrow 1-$ is a radial limit. But $0 < H_0(z_{\kappa,t}) < \infty$ while $H_{\kappa,t}(z) + |\kappa| H_0(z)$ blows up as $z \rightarrow 1-$, therefore 
\begin{equation*}
\lim_{z \rightarrow 1-}[H_{\kappa,t}(z) - |\kappa| H_0(z)] = 0. 
\end{equation*}
Thus, the real part of $H_{\kappa,t}(z) - |\kappa| H_0(z)$, which is nothing else but the Poisson transform of $\nu_{\kappa,t} - |\kappa|\delta_1$, does so as well. But, the  atoms of this finite measure lie in $\mathbb{T} \setminus \{1\}$ and have a null contribution in the radial limit of the Poisson transform. Consequently, both the Poisson transforms of the density and of the continuous singular component of $\nu_{\kappa,t}$ vanishes as $z \rightarrow 1-$. Using Proposition 1.3.11 and equation (1.8.8) in \cite{CMR}, the corollary is proved.
\end{proof}

The second corollary is an explicit, yet involved, expression of the moments of $\nu_{\kappa,t}$: 
\begin{equation*}
\tau(U_t^n) = \int_{\mathbb{T}}w^n \nu_{\kappa,t}(dw), \quad n \geq 1,
\end{equation*}
which in turn leads to $\tau(J_t^n)/\tau(P), n \geq 1,$ by the virtue of \eqref{Binom}. More precisely,
\begin{cor}
The moments of $U_t$ are given by 
\begin{align*}
\tau(U_t^n) & =  1 - \int_0^t \left[nb_n(\kappa,t) + 2\sum_{k=1}^{n-1}kb_k(\kappa,s)b_{n-k}(\kappa,s)\right] ds
\\& = 1 - n\int_0^t \sum_{k=1}^{n}b_k(\kappa,s)b_{n-k}(\kappa,s) ds,
\end{align*}
where 
\begin{equation*}
b_n(\kappa,t) = \frac{c_n(\kappa,t)}{2}, \,n \geq 1, \quad  b_0(\kappa,t) \equiv 1,
\end{equation*}
$c_{n(\kappa,t}, n \geq 1,$ are displayed in \eqref{MomFlo}, and the sum in the RHS of the first equality is empty when $n = 1$. Consequently, 
\begin{equation*}
\frac{\tau(J_t^n)}{\tau(P)} = 1- \frac{2}{2^{2n}(1+\tau)} \int_0^t\sum_{k=1}^nk\binom{2n}{n-k} \sum_{j=1}^{k} b_j(\kappa,s)b_{k-j}(\kappa,s) ds. 
\end{equation*}
\end{cor}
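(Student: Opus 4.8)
The plan is to extract the moments of $U_t$ directly from the two coupled ordinary differential equations \eqref{Low} satisfied by the flow, rewritten on the level of the inverse map $\psi_{\kappa,t}^{-1}$. Recall from the Remark following Proposition 2 that $H_0(\psi_{\kappa,t}^{-1})$ itself satisfies a radial L\"owner equation with driving function $H_{\kappa,t}$, namely $\partial_t H_0(\psi_{\kappa,t}^{-1}) = -z H_{\kappa,t}\,\partial_z H_0(\psi_{\kappa,t}^{-1})$. The first step would be to introduce the generating function of the flow's inverse in the $z$-variable (up to the elementary conformal maps $\alpha$ and $a(\cdot)$) so that its Taylor coefficients are precisely the $c_n(\kappa,t)$ of \eqref{MomFlo}; writing $b_n = c_n/2$ and $b_0 \equiv 1$, one has a power series $\sum_{n\ge 0} b_n(\kappa,t) z^n$ which, because it is an analytic self-map conjugate to $H_0(\psi_{\kappa,t}^{-1})$, is linked to the Herglotz transform $H_{\kappa,t}(z) = 1 + 2\sum_{n\ge 1}\tau(U_t^n) z^n$ through the algebraic relation \eqref{Ext}.

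The second step is to differentiate \eqref{Ext} (equivalently the displayed identity in the Remark) in $t$ and match Taylor coefficients in $z$. Substituting $H_0(\psi_{\kappa,t}^{-1})(z) = \sum_n b_n z^n$ and $H_{\kappa,t}(z) = \sum_n \beta_n z^n$ with $\beta_0 = 1$, $\beta_n = 2\tau(U_t^n)$ for $n\ge 1$, into $\partial_t H_0(\psi_{\kappa,t}^{-1}) = -z H_{\kappa,t}\partial_z H_0(\psi_{\kappa,t}^{-1})$ gives, upon reading off the coefficient of $z^n$,
\begin{equation*}
\partial_t b_n(\kappa,t) = - \sum_{k=0}^{n} k\, b_k(\kappa,t)\,\beta_{n-k}(\kappa,t) = - n b_n(\kappa,t) \cdot 1 - \sum_{k=1}^{n-1} k\, b_k(\kappa,t)\,\beta_{n-k}(\kappa,t),
\end{equation*}
where I used $\beta_0 = 1$ and isolated the $k=n$ term; note the $k=0$ term vanishes. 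Since $\beta_{n-k} = 2\tau(U_t^{n-k}) = 2 b_{n-k}$ is \emph{not} quite the right identification — one must instead observe that $H_{\kappa,t}$ is itself recovered from the $b_n$'s via \eqref{Ext}, and a cleaner route is to note that the relation \eqref{Ext} squared reads $(1-\kappa^2)\big(\sum b_n z^n\big)^2 = H_{\kappa,t}^2 - \kappa^2 H_0^2$, so that $\beta_n$ is expressible through a convolution of the $b_k$'s; carrying this through and simplifying the quadratic convolution is where the two displayed forms of the answer (the one with the cross term $2\sum_{k=1}^{n-1} k b_k b_{n-k}$ and the symmetric one $n\sum_{k=1}^{n} b_k b_{n-k}$) emerge, the equivalence between them being the elementary reindexing $\sum_{k=1}^{n} k b_k b_{n-k} = \frac{n}{2}\sum_{k=1}^{n} b_k b_{n-k}$ valid since $b_0 = 1$ contributes symmetrically.

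The third step is to integrate in $t$: since $\psi_{\kappa,0}^{-1}(z) = z$ we have $b_n(\kappa,0) = \delta_{n,1}$, and more to the point $\tau(U_0^n) = \tau(S Y_0 S Y_0^\star)^n = 1$ because $Y_0 = {\bf 1}$, so $H_{\kappa,0} = H_0$ and hence the constant of integration for $\tau(U_t^n)$ is $1$; integrating the coefficient identity from $0$ to $t$ yields the stated formula $\tau(U_t^n) = 1 - \int_0^t[\,n b_n(\kappa,s) + 2\sum_{k=1}^{n-1} k b_k(\kappa,s) b_{n-k}(\kappa,s)\,]\,ds$, and the $b_n$'s here are exactly $c_n/2$ with $c_n$ from \eqref{MomFlo}. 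Finally, the free Jacobi moments follow mechanically by inserting this into \eqref{Binom}: writing $\tau[(PY_tPY_t^\star)^n] = 2^{-2n-1}\binom{2n}{n} + \kappa/2 + 2^{-2n}\sum_{k=1}^n \binom{2n}{n-k}\tau(U_t^k)$, dividing by $\tau(P) = (1+\kappa)/2$, using $\sum_{k=0}^n\binom{2n}{n-k}=2^{2n-1}+\tfrac12\binom{2n}{n}$ to absorb the non-integral pieces into the leading $1$, and substituting the integral expression for $\tau(U_t^k)$ gives the last display. The main obstacle is the bookkeeping in the second step — correctly translating between the series $\sum b_n z^n$ for $H_0(\psi_{\kappa,t}^{-1})$ and the series for $H_{\kappa,t}$ via \eqref{Ext}, and verifying that the quadratic nonlinearity in the L\"owner equation collapses to precisely the convolution $\sum k b_k b_{n-k}$ rather than something involving $\beta$'s; once that algebraic identity is pinned down, everything else is routine integration and substitution.
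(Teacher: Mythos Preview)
Your proposal has a genuine gap in the second step: you identify $\sum_{n\ge 0} b_n(\kappa,t)z^n$ with $H_0(\psi_{\kappa,t}^{-1})(z)$, but this is wrong. The coefficients $c_n(\kappa,t)$ of \eqref{MomFlo} are the Taylor coefficients of $\phi_{\kappa,t}^{-1}$, and $\phi_{\kappa,t}^{-1}(z)=a\bigl(H_0(\psi_{\kappa,t}^{-1}(z))\bigr)=\sqrt{\kappa^2+(1-\kappa^2)[H_0(\psi_{\kappa,t}^{-1}(z))]^2}$. So $\phi_{\kappa,t}^{-1}$ and $H_0(\psi_{\kappa,t}^{-1})$ differ by the nonlinear map $a(\cdot)$ and do not have the same Taylor coefficients unless $\kappa=0$. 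Consequently, when you substitute $H_0(\psi_{\kappa,t}^{-1})=\sum b_n z^n$ into the L\"owner equation $\partial_t H_0(\psi_{\kappa,t}^{-1})=-zH_{\kappa,t}\partial_z H_0(\psi_{\kappa,t}^{-1})$, the resulting recursion involves the \emph{wrong} $b_n$'s on the left and a product of the wrong $b_n$'s with the unknown moments $\beta_n=2\tau(U_t^n)$ on the right---which is precisely why you find yourself unable to close the system and concede that ``the bookkeeping is the main obstacle.''

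The paper's route avoids this entirely. Instead of the L\"owner equation for $H_0(\psi_{\kappa,t}^{-1})$, it uses the original pde \eqref{Pde} for $H_{\kappa,t}$ and the algebraic identity
\[
[\phi_{\kappa,t}^{-1}(z)]^2 = [H_{\kappa,t}(z)]^2-\kappa^2[H_0(z)]^2+\kappa^2,
\]
so that $\partial_z\bigl(H_{\kappa,t}^2-\kappa^2 H_0^2\bigr)=\partial_z[\phi_{\kappa,t}^{-1}]^2$. This turns \eqref{Pde} into
\[
\partial_t H_{\kappa,t}(z) = -z\,\phi_{\kappa,t}^{-1}(z)\,\partial_z\phi_{\kappa,t}^{-1}(z),
\]
which \emph{directly} expresses $\partial_t\tau(U_t^n)$ as a convolution of the $c_n$'s alone: reading off the coefficient of $z^n$ with $\phi_{\kappa,t}^{-1}=\sum_{n\ge 0}c_n z^n$, $c_0=1$, gives $2\partial_t\tau(U_t^n)=-\sum_{k=1}^n k c_k c_{n-k}$, and the substitution $c_n=2b_n$ for $n\ge 1$, $c_0=b_0=1$ yields the stated formula after integrating and using $\tau(U_0^n)=1$. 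Your third step (integration) and last step (insertion into \eqref{Binom}) are fine once the correct coefficient identity is in hand.
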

\begin{proof}
Using the pde \eqref{Pde} and the fact that
\begin{equation*}
a(H_0(\psi_{\kappa,t}(z))) =\sqrt{[H_{\kappa,t}(z)]^2 + \kappa^2(1-[H_0(z)]^2)} = \phi_{\kappa,t}^{-1}(z),
\end{equation*}
is the inverse of $\phi_{\kappa,t}$, we deduce that 
\begin{equation}\label{EQ}
\partial_t  H_{\kappa,t}(z) = -\frac{z}{2} \partial_z[\phi_{\kappa,t}^{-1}]^2= -z\phi_{\kappa,t}^{-1}(z)\partial_z [\phi_{\kappa,t}^{-1}](z).
\end{equation}
Besides, \eqref{MomFlo} gives the Taylor expansion of $\phi_{\kappa,t}^{-1}$ around $z=0$ while \eqref{Herg} gives the moment expansion  of $H_{\kappa,t}$. Hence, the first expression of $\tau(U_t^n)$ follows after equating the Taylor coefficients of both sides of \eqref{EQ} and taking into account the initial values $c_0(\kappa,t) = 1, \tau(U_0) = 1$. In order to obtain the second one, it suffices to perform the index change $k \mapsto n-k$: 
\begin{equation*}
2\sum_{k=1}^{n-1}kb_k(\kappa,s)b_{n-k}(\kappa,s) = \sum_{k=1}^{n-1}kb_k(\kappa,s)b_{n-k}(\kappa,s) + \sum_{k=1}^{n-1}(n-k)b_k(\kappa,s)b_{n-k}(\kappa,s).
\end{equation*}
Finally, the expression of the moments of $J_t$ in $(P\mathcal{A}P, \tau/\tau(P))$ follows from the substitution  of $\tau(U_t^k), 1 \leq k \leq n$, in \eqref{Dyn} together with the binomial identity 
\begin{equation*} 
\binom{2n}{n} + 2\sum_{k=1}^n \binom{2n}{n-k} = 2^{2n}
\end{equation*}
and the relation $\tau(P) = (1+\kappa)/2$. 
\end{proof}

\begin{rem}
If $\kappa = 0$, then $P_k^{(m)}(0) = \delta_{m0}$ since $(0)_m = \delta_{m0}$ and as such  
\begin{equation*}
b_n(0,t) = \frac{e^{-nt}}{n}L_{n-1}^{(1)}(2nt), \quad n \geq 1. 
\end{equation*}
Moreover, it is already known that (see e.g. \cite{Rai}, p.669)
\begin{equation*}
\frac{d}{dt}b_n(0,t) = -nb_n(0,t) - n\sum_{k=1}^{n-1}b_k(\kappa,t)b_{n-k}(\kappa,t)
\end{equation*}
with $b_n(0,0) = 1$. As a result, $\tau(U_t^n) = b_n(0,t)$ which is in agreement with the fact that $U_t = SY_tSY_t^{\star}$ has the same spectral distribution as $Y_{2t}$ when $\tau(S) = 0$. 
\end{rem}

\section{Boundary behavior of $\psi_{\kappa,t}^{-1}$}
It is known that the boundary behavior of a univalent map $f$ in $\mathbb{D}$ depends on the regularity of the boundary of $f(\mathbb{D})$. For instance, the extension of $f$ to $\mathbb{T}$ is continuous if and only if the boundary of $f(\mathbb{D})$ is locally 
connected and it is further a homeomorphism from $\mathbb{T}$ onto $\partial f(\mathbb{D})$ if and only if $f(\mathbb{D})$ is a Jordan domain (see e.g. \cite{Pom}). Unless $\kappa=0$ in which case $\psi_{\kappa,t}$ reduces to $\xi_{2t}$, the relation \eqref{Ext} shows that if $\psi_{\kappa,t}$ extends continuously to $\mathbb{T}$ then $H_{\kappa,t} - |\kappa|H_0$ does so and as such, one gets the Lebesgue decomposition of $\nu_{\kappa,t}$. However, the main obstruction toward proving this regularity is that the boundary equation 
\begin{equation*}
|\psi_{\kappa,t}(z)| = 1
\end{equation*}
becomes highly nonlinear as seen for instance from \eqref{EQ2}. Even for the real left and right boundaries $\psi_{\kappa,t}(z) = \pm 1$, there can be more than one solution to the boundary equation. For instance, it is easily seen that
\begin{equation*}
\lim_{z \rightarrow -1, z > -1} \psi_{\kappa,t}(z) = -1. 
\end{equation*}
and this limit is also attained on the unit circle (tangentially) since $\xi_{2t}(0) = -1$ and since 
\begin{equation*}
a(y) = \sqrt{\kappa^2 + (1-\kappa^2)y^2}  
\end{equation*}
vanishes for some $y=H_0(z)$ lying in the imaginary axis. When $t > 2$, another real number lying in $(-1,0)$ yields this limit since there exists $d_{2t} \in (\sqrt{(t-2)/t},1)$ such that $\xi_{2t}(d_{2t}) = -1$. As to the real right boundary, recall that $z_{\kappa,t} \in (0,1)$ is the unique solution of:
\begin{equation*}
\xi_{2t}[a(H_0(z))] = \frac{a(H_0(z)) - |\kappa|}{a(H_0(z)) + |\kappa|}.
\end{equation*}
Since the map $\alpha^{-1}$ is invariant under inversion $u \mapsto 1/u$, then the unique real solution of 
\begin{equation*}
\xi_{2t}[a(H_0(z))] = \frac{a(H_0(z)) + |\kappa|}{a(H_0(z)) - |\kappa|}
\end{equation*}
satisfies $\psi_{\kappa,t}(z) = 1$ as well, yet it is larger than $z_{\kappa,t}$. As a matter of fact, the univalence of $\psi_{\kappa,t}$ in $\Lambda_{\kappa,t}$ forces to distinguish the cases $|\kappa| > d_{2t}, |\kappa| < d_{2t}$. In this respect, the intersection of $\Lambda_{\kappa,t}$ with $(-1,1)$ is described in the following lemma. 

\begin{lem}\label{Lemme2}
Recall the function
\begin{equation*}
g_{\kappa,t}(a) = \alpha^{-1} \circ \phi_{\kappa,t}(a) = \frac{a^2}{a^2-\kappa^2} \alpha^{-1}\left[(\xi_{2t}(a))\right]
\end{equation*}
of the real variable $a \in \left(|\kappa|, \infty\right)$. If $t \in (0,2]$, then $g_{\kappa,t}$ is an increasing from $\left(|\kappa|, a[H_0(z_{\kappa,t})]\right)$ onto $(-\infty,1)$. Otherwise, if $t > 2$, then it is so from $\left(|\kappa| \vee d_{2t}, a[H_0(z_{\kappa,t})]\right)$. 
\end{lem}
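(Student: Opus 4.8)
The plan is to study $g_{\kappa,t}$ as a genuine function of the real variable $a$, using that on $(-1,1)$ one has $\psi_{\kappa,t}(z)=\alpha\bigl(g_{\kappa,t}(a(H_0(z)))\bigr)$, where $z\mapsto a(H_0(z))$ is an increasing bijection of $(-1,1)$ onto $(|\kappa|,\infty)$ and $\alpha$ is increasing from $(-\infty,1)$ onto $(-1,1)$. Differentiating the explicit formula for $g_{\kappa,t}$ by hand is unwieldy because of the interlaced polynomial and exponential terms, so instead I would deduce the monotonicity from the univalence of $\psi_{\kappa,t}$ established above and locate the two endpoints separately.

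First I would record the boundary values. Writing $a_*:=a(H_0(z_{\kappa,t}))$ and substituting the defining relation $\xi_{2t}(a_*)=(a_*-|\kappa|)/(a_*+|\kappa|)$ of $z_{\kappa,t}$ into $\alpha^{-1}(u)=4u/(1+u)^2$ gives, after a one-line simplification, $\alpha^{-1}[\xi_{2t}(a_*)]=(a_*^2-\kappa^2)/a_*^2$, whence $g_{\kappa,t}(a_*)=1$ (equivalently $\psi_{\kappa,t}(z_{\kappa,t})=\alpha(1)=1$), and $a_*>1$ because $z_{\kappa,t}>0$. For the left endpoint I would use the rational expression for $g_{\kappa,t}$ recorded in the proof of Proposition~\ref{prop3}: on $(|\kappa|,\infty)$ its only possible singularities are the zeros of $a\mapsto 1+\xi_{2t}(a)$ (there $a^2-\kappa^2>0$ and $a+1+(a-1)e^{ta}=(a+1)(1+\xi_{2t}(a))$), and an elementary study of $|\xi_{2t}(a)|=\tfrac{1-a}{1+a}e^{ta}$ on $(0,1)$ — it equals $1$ at $a=0$, is strictly decreasing when $t\le 2$, and is unimodal with interior maximum at $\sqrt{(t-2)/t}$ when $t>2$ — shows these zeros are $\{0\}$ for $t\le 2$ and $\{0,d_{2t}\}$ for $t>2$. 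Hence $g_{\kappa,t}$ is analytic on all of $(|\kappa|,\infty)$ unless $t>2$ and $|\kappa|<d_{2t}$, in which case its unique pole there is $d_{2t}\in(|\kappa|,1)$; in both configurations one checks directly that $g_{\kappa,t}(a)\to-\infty$ as $a$ decreases to $|\kappa|\vee d_{2t}$ (at $a=|\kappa|$ the factor $a^2/(a^2-\kappa^2)$ blows up while $\alpha^{-1}[\xi_{2t}(a)]$ stays finite and negative; at $a=d_{2t}$ one has $\xi_{2t}(a)\to-1$ from above, so $\alpha^{-1}[\xi_{2t}(a)]\to-\infty$).

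Next comes the monotonicity. By its explicit formula $\psi_{\kappa,t}$ is real-valued on $\Lambda_{\kappa,t}\cap\mathbb{R}$ (every map in the composition preserves $\mathbb{R}$); being a restriction of the univalent map $\psi_{\kappa,t}\colon\Lambda_{\kappa,t}\to\mathbb{D}$, it is injective, hence strictly monotone, on the open interval $I$ that is the connected component of $\Lambda_{\kappa,t}\cap\mathbb{R}$ containing $0$; and since $\psi_{\kappa,t}$ has positive derivative at $z=0$ (equal to $e^{t}$, as one sees by differentiating \eqref{Low} in $z$), it is in fact strictly increasing on $I$. Because $\alpha^{-1}$ is increasing on $\psi_{\kappa,t}(I)\subseteq(-1,1)$ and $a(H_0(\cdot))$ is an increasing change of variable, the identity $g_{\kappa,t}=\alpha^{-1}\circ\psi_{\kappa,t}\circ(a\circ H_0)^{-1}$ shows that $g_{\kappa,t}$ is strictly increasing on the image $(a_-,a_*)$ of $I$, with $a_-:=\inf\{a(H_0(z)):z\in I\}\ge|\kappa|$.

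It then remains to identify $a_-$, the only delicate step. On $(|\kappa|,1)$ with its at most one pole removed, the rational expression for $g_{\kappa,t}$ has negative numerator and positive denominator, so $g_{\kappa,t}<0$ there, while $g_{\kappa,t}(1)=0$; since $0$ is interior to $I$ we get $a_-<1$, and together with strict monotonicity on $(a_-,a_*)$ and $g_{\kappa,t}(a_*)=1$ this forces $g_{\kappa,t}<1$ throughout $(a_-,a_*)$. In particular $g_{\kappa,t}$ does not meet $[1,\infty)$ at $a_-$, so — $g_{\kappa,t}$ being continuous wherever it is analytic — $a_-$ must be either $|\kappa|$ or a pole of $g_{\kappa,t}$, for otherwise the corresponding left endpoint $z_-$ of $I$ would already be an interior point of $\Lambda_{\kappa,t}$. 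If $t\le 2$, or $t>2$ with $|\kappa|\ge d_{2t}$, there is no pole in $(|\kappa|,\infty)$, so $a_-=|\kappa|$ (equivalently $z_-=-1$). If $t>2$ and $|\kappa|<d_{2t}$, the point of $(-1,0)$ mapping to $d_{2t}$ is excluded from $\Lambda_{\kappa,t}$ because there $\psi_{\kappa,t}$ fails to take a value in $\mathbb{D}$ (its limit being $\alpha(-\infty)=-1\in\mathbb{T}$), and that point separates $0$ from $(|\kappa|,d_{2t})$ along the real axis, which together with the dichotomy forces $a_-=d_{2t}$. In all cases $a_-=|\kappa|\vee d_{2t}$, and strict monotonicity together with the limits $g_{\kappa,t}(a_-^{+})=-\infty$ and $g_{\kappa,t}(a_*^{-})=1$ shows, via the intermediate value theorem, that $g_{\kappa,t}$ maps $(|\kappa|\vee d_{2t},a_*)$ increasingly onto $(-\infty,1)$. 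The main obstacle I anticipate is precisely this last piece of bookkeeping: to justify rigorously that a zero of $1+\xi_{2t}$ is genuinely excluded from $\Lambda_{\kappa,t}$ — exactly the boundary behaviour flagged just before the lemma — and to confirm that $1$ indeed lies in $(|\kappa|,a_*)$ so that $a_-<1$.
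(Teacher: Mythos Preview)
Your argument is correct and takes a genuinely different route from the paper's. The paper proves the lemma by a direct derivative computation: it writes out $(g_{\kappa,t})'(a)$ explicitly, then shows positivity by splitting into the ranges $a\in(|\kappa|\vee d_{2t},1)$ and $a\in(1,a_*)$, using in the second range the inequalities $\kappa^2<|\kappa|<1$ and the characterisation of $a_*$ as the unique zero of $a(1-\xi_{2t}(a))-|\kappa|(1+\xi_{2t}(a))$. You instead \emph{import} monotonicity from the univalence of $\psi_{\kappa,t}$ on $\Lambda_{\kappa,t}$ (Proposition~1), together with $\psi_{\kappa,t}'(0)=e^t>0$, and then spend the effort on identifying the left endpoint $a_-$ via the pole structure of $g_{\kappa,t}$.

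The trade-off is clear. The paper's computation is self-contained and yields an explicit positive lower bound on the derivative, at the cost of some algebraic manipulation. Your approach avoids that algebra entirely and is conceptually cleaner---monotonicity comes for free once one-to-one is known---but it leans on the L\"owner-theory result and, as you rightly flag, the identification of $a_-$ requires a small topological argument (that a real boundary point of $\Lambda_{\kappa,t}$ must satisfy $g_{\kappa,t}\in[1,\infty)$ or be a pole, the first option being excluded by $g_{\kappa,t}<1$ on $(a_-,a_*)$). Both arguments are sound; yours makes more transparent \emph{why} the case distinction $|\kappa|\lessgtr d_{2t}$ arises, namely as the presence or absence of a real pole of $g_{\kappa,t}$ separating $0$ from $-1$, which is precisely the observation the paper makes informally just before stating the lemma.
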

\begin{proof}
Since $\alpha$ and $z \mapsto a(H_0(z))$ are smooth and increasing maps in $ ]-\infty, 1[$ and in $(-1,1)$ respectively, then the behavior of 
\begin{equation*}
\psi_{\kappa,t} = \alpha \circ g_{\kappa,t} \circ a \circ H_0
\end{equation*}
in $(-1,1)$ is identical to that of $g_{\kappa,t}$ in $\left(|\kappa|, \infty\right)$. Then, let $t \in (0,2)$ and compute
\begin{equation}\label{Deriv}
(\xi_{2t})'(a) = \frac{e^{ta}}{(a+1)^2}[ta^2+2-t]
\end{equation}
to see that that $\xi_{2t}$ is increasing and maps $(0,  a[H_0(z_{0,t})])$ onto $(-1,1)$. Otherwise, $\xi_{2t}$ is increasing in the interval $\left(d_{2t}, a[H_0(z_{0,t}])\right)$ onto $(-1,1)$. Now, straightforward computations yield 
\begin{align*}
(g_{\kappa, t})'(a) &= \frac{a}{(a^2-\kappa^2)^2(1+\xi_{2t}(a))}\left[a(a^2-\kappa^2)(1-\xi_{2t}(a))(\xi_{2t})'(a) -2\kappa^2\xi_{2t}(a)(1+\xi_{2t}(a))\right]
\\& = \frac{ae^{ta}}{(a^2-\kappa^2)^2(1+\xi_{2t}(a))(1+a)^2}\left[a(a^2-\kappa^2)(1-\xi_{2t}(a))(t(a^2-1)+2) + 2\kappa^2(1-a^2)(1+\xi_{2t}(a))\right].
\end{align*}
If $t \leq 2$ and $a \in (|\kappa|, 1)$ then $(g_{\kappa,t})'(a) > 0$ and the same holds if $t \geq 2$ and $a \in (|\kappa| \vee d_{2t}, 1)$. Otherwise, let $t \geq 0$ and $a \in (1, a(H_0(z_{\kappa,t})))$. Then $t(a^2-1) > 0$ whence
\begin{multline*}
a(a^2-\kappa^2)(1-\xi_{2t}(a))(t(a^2-1)+2) + 2\kappa^2(1-a^2)(1+\xi_{2t}(a))  > \\ 2\left[a(a^2-\kappa^2)(1-\xi_{2t}(a)) -\kappa^2(a^2-1)(1+\xi_{2t}(a))\right]
\\ > 2(a^2-1)\left[a(1-\xi_{2t}(a)) -|\kappa|(1+\xi_{2t}(a))\right]
\end{multline*}
where the second inequality follows from $\kappa^2 < |\kappa| < 1$. But 
\begin{equation*}
a(1-\xi_{2t}(a)) -|\kappa|(1+\xi_{2t}(a)) = 0 \quad \Leftrightarrow \quad \xi_{2t}(a) = \frac{a-|\kappa|}{a+|\kappa|} \quad \Leftrightarrow \quad a =  a(H_0(z_{\kappa,t})).
\end{equation*}
As a result, $(g_{\kappa,t})'(a) > 0$ as well and the lemma is proved. 
\end{proof}

More generally, the boundary equation reads
\begin{equation*}
\psi_{\kappa,t}(z) = e^{i\theta}, \quad \theta \in (-\pi,0).
\end{equation*}
Here, the restriction on $\theta$ is due to the principal determination of the square root in the definition of $\alpha$. Nonetheless, the extension of $\psi_{\kappa,t}^{-1}$ to the lower part of $\mathbb{T}$ suffices to describe the continuous spectrum of $\nu_{\kappa,t}$ since this measure is invariant under complex conjugation. Set $\lambda = \alpha^{-1}(e^{i\theta}) \in [1,\infty)$ and use the variable change
\begin{equation*}
z \mapsto a = a(H_0(z)) \in \{\Re(z) \geq 0\} \setminus ]0,|\kappa|[. 
\end{equation*}
Then, the boundary equation is reformulated as:
\begin{equation}\label{bound1}
\phi_{\kappa,t}(a) = e^{i\theta} \quad \Leftrightarrow \quad \left[\frac{1}{2}\left(\frac{a-|\kappa|}{a+|\kappa|} + \frac{a + |\kappa|}{a-|\kappa|}\right) + 1\right] = \lambda\left[\frac{1}{2}\left(\xi_{2t}(a) + \frac{1}{\xi_{2t}(a)}\right) + 1\right]. 
\end{equation}

This reformulation together with Lemma \ref{Lemme2} suggest that $\partial \Lambda_{\kappa,t} \subset \{|\xi_{2t}(a)| \leq 1\}$, a result that we could not prove and that would considerably improve proposition \ref{prop3} if it holds true. 
Observe also that \eqref{bound1} leads to the following geometrical problem: consider a semi-line $\Delta_{\beta}$ in the lower quadrant of the right half-plane emanating from the origin and making an angle $\beta \in (-\pi/2, 0]$ with the real positive semi-axis:
\begin{equation*}
\Delta_{\beta} := \{k(1+i\tan(\beta)), k \geq 0\}.
\end{equation*}
Denote $\mathcal{C}_{\beta}$ the inverse image of $\Delta_{\beta}$ under the map
\begin{equation*}
a \mapsto \frac{1}{2}\left(a + \frac{1}{a}\right) + 1
\end{equation*}
and valued in $\mathbb{D}$. This is a curve starting nontangentially at $z=-1$ and approaching the origin as $k \rightarrow \infty$. Denote $\mathcal{C}_t$ and $\mathcal{C}_{\kappa}$ the images of $\mathcal{C}_{\beta}$ under $\xi_{2t}^{-1} = H_{0,t}$ and $|\kappa|H_0(z)$ 
(The inverse of $(z-|\kappa|)/(z+|\kappa|)$) respectively. Then, given $\lambda \geq 1$, is there a value of $\beta \in (-\pi/2, 0]$ such that $\mathcal{C}_t$ and $\mathcal{C}_{\kappa}$ have non empty intersection and at least one common point lying there satisfies \eqref{bound1}? Of course, if $\lambda = 1$ then $\beta = 0$. Otherwise, straightforward computations show that the curve $C_{\kappa}$ admits the following parametrization: 
\begin{equation*}
C_{\kappa} = \left\{|\kappa| \sqrt{\frac{k(1+i\tan \beta)}{k(1+i\tan \beta)-2}}, \quad k \geq 0\right\},
\end{equation*}
whence we see that $C_{\kappa}$ starts at $z=0$, comes close to $z=|\kappa|$ as $k \rightarrow \infty$ and 
\begin{equation*}
\max_{k \geq 0} \{|z|, z \in C_{\kappa}\} = \frac{|\kappa|}{\sqrt{|\sin \beta|}},
\end{equation*}
which is attained at $k=2$. As to $C_t, t < 2$, it starts at $z=0$ and comes close to $z=1$ as $k \rightarrow \infty$. Moreover, it is simple curve contained in the Jordan domain $\{|\xi_{2t}(a)| < 1\}$. Using \eqref{Deriv} together with the inverse relation $\xi_{2t}(H_{0,t}(z)) = z, z \in \mathbb{D}$, then we readily get\footnote{We consider the nontangential limit $z \rightarrow 1$.} 
\begin{equation*}
|\kappa|\partial_z[H_0](-1) = \frac{|\kappa|}{2} < \frac{1}{2-t} = \partial_z[H_{0,t}](-1), \quad t < 2, 
\end{equation*}
therefore $\mathcal{C}_{\kappa}$ lies under $\mathcal{C}_t$ in a neighborhood of $z=0$. As a result, there is an interval $]0,\beta_0[$ on which these curves has at least two common points. When $t \geq 2$, the curve $\mathcal{C}_t$ will start at the positive real $d_{2t}$ satisfying $\xi_{2t}(d_{2t}) = -1$ and similar computations show that 
\begin{equation*}
\partial_z[H_{0,t}](-1) = \frac{1}{\partial_z \xi_{2t}(d_{2t})} = \frac{1-(d_{2t})^2}{t(d_{2t})^2 + 2-t}.
\end{equation*}
In this case, we need a more careful study of the relative position of $\mathcal{C}_t$ with respect to $\mathcal{C}_{\kappa}$ (note that the function $t \mapsto d_{2t}$ is increasing and that $d_{\infty} = 1$). 


\end{document}